\newcommand{\ag}{\alpha}
\newcommand{\dg}{\delta}
\newcommand{\eps}{\varepsilon}
\newcommand{\sg}{\sigma}
\newcommand{\FF}{\mathcal{F}}
\newcommand{\GG}{\mathcal{G}}
\newcommand{\UU}{\mathcal{U}}
\DeclareMathOperator{\sgn}{sgn}
\theoremstyle{plain}
\newtheorem{theorem}{Theorem}
\newtheorem{lemma}[theorem]{Lemma}
\newtheorem{proposition}[theorem]{Proposition}
\author{%
  Pierre~Boutaud\footnote{Université Paris-Saclay, CNRS,  Laboratoire de mathématiques d'Orsay, 91405, Orsay, France.
    E-mail: \texttt{pierre.boutaud at u-psud dot fr}}
  \and 
  Pascal~Maillard\footnote{Institut de Mathématiques de Toulouse, CNRS, UMR5219, Université de Toulouse, 118 route de Narbonne, F-31062 Toulouse cedex 09, France. E-mail: \texttt{Pascal.Maillard at math dot univ-toulouse dot fr}}}
\title{Seneta-Heyde norming for branching random walks with $\alpha$-stable spine}
\date{April 6, 2020}
\begin{document}

\maketitle

\begin{abstract}
We consider branching random walks with a spine in the domain of attraction of an $\alpha$-stable Lévy process. For this process, the classical derivative martingale in general degenerates in the limit. We first determine the quantity replacing the derivative martingale and show that it converges to a non-degenerate limit under a certain $L\log L$-type condition which we assume to be optimal. We go on to give the Seneta-Heyde norming for the critical additive martingale under the same assumptions. The proofs are based on the methods introduced in our previous paper which considered the finite variance case [Boutaud and Maillard (2019), \textit{EJP}, vol. 24, paper no. 99].

\medskip

\noindent\textbf{Keywords and phrases.} Branching random walk; Seneta-Heyde norming; regular variation; stable laws; random walk; renewal function

\medskip

\noindent\textbf{MSC2010 Classification.} Primary: 60J80, Secondary: 60E07, 60G50, 60G52
\end{abstract}

\section{Introduction}\label{sec:intro}
\subsection*{Definitions and results}

We consider discrete-time real-valued branching random walks (BRWs), which can be informally described as follows. At time $n=0$, we start with one initial particle at the origin. Then, at each time step $n\ge 1$, every particle dies and gives birth to a random, possibly infinite number of particles distributed randomly on the real line. More precisely, the children of a particle at position $x\in\mathbb{R}$ are positioned at $x+X_1,x+X_2,\ldots$, where the vector $(X_1,X_2,\ldots)$ follows a given law $\Theta$, called the offspring distribution of the branching random walk. At each generation, the reproduction events are independent. Also, it is possible for several particles to share the same position. We further assume that the Galton-Watson process formed by the number of particles at each generation is super-critical, so that the system survives with positive probability.

The last decade has seen considerable interest in the \emph{extremal particles} in branching random walks, as well as in related models such as the two-dimensional Gaussian Free Field, Gaussian multiplicative chaos and characteristic polynomials of certain random matrices, see e.g. \cite{ShiLectureNotes,ZeitouniLNBRW,RhodesVargasReview,BovierBook} for fairly recent reviews. A fundamental tool in the study of the extremes of the branching random walk is the so-called spine decomposition which allows to represent the branching random walk after a change of measure as another branching process involving a special particle called the \emph{spine} evolving as a certain random walk. Almost all results in the literature have been obtained under the assumption that this ``spinal'' random walk has finite variance. It is natural to wonder what happens when this condition is not satisfied, which is the goal of the present article.

Formally, the branching random walk can be constructed as a stochastic process indexed by the Ulam-Harris tree $\UU= \bigcup_{n\ge0} (\mathbb{N}^*)^n$, where $\mathbb{N}^* = \{1,2,\ldots\}$. Particles are identified with vertices $u\in\UU$, i.e.~words over the alphabet $\mathbb{N}^*$. The length of the word $u$, i.e.~the generation of the particle, is denoted by $|u|$. The position of the particle $u$ is denoted by $X_u$. If the particle indexed by $u$ does not exist, we set $X_u = +\infty$. The branching random walk described above then defines a process $(X_u)_{u\in\UU}$ taking values in $\bar{\mathbb{R}} = \mathbb{R}\cup\{+\infty\}$ and the offspring distribution $\Theta$ is a probability distribution on $(\bar{\mathbb{R}})^{\mathbb{N}^*}$. We further convene that mathematical expressions such as sums or products over the set $\{|u| = n\}$ of particles at generation $n$ are meant to ignore those $u$ for which $X_u = +\infty$. Furthermore, we use the convention $\sum_{\emptyset}=0$ and $\prod_{\emptyset}=1$.

As mentioned above, we assume that the branching is super-critical, i.e.
\begin{equation}\label{eq:supercritical}
\mathbb{E}\left[\sum_{|u|=1} 1\right] > 1.
\end{equation}
We recall the definition of the so-called boundary case \cite{BK2005}:
\begin{equation}\label{eq:boundarycase}
\mathbb{E}\left[\sum_{|u|=1}e^{-X_{u}}\right]=1\quad\text{and}\quad\mathbb{E}\left[\sum_{|u|=1}X_{u}e^{-X_{u}}\right]=0,
\end{equation}
where it is implicitly assumed that the second expectation is well-defined.

It is well known that under \eqref{eq:boundarycase}
\begin{equation*}
W_{n}=\sum_{|u|=n}e^{-X_{u}},\qquad D_{n}=\sum_{|u|=n}X_{u}e^{-X_{u}},
\end{equation*}
are martingales with respect to the canonical filtration of the branching random walk $\FF_n=\sg(X_u,|u|\le n)$. We will refer to $(W_n)_{n\ge0}$ as the \emph{additive martingale} or \emph{Biggins' martingale}, in reference to Biggins \cite{Biggins1977} and to $(D_n)_{n\ge0}$ as the \emph{derivative martingale}. Since $(W_n)$ is a non-negative martingale it converges almost surely and the second equality in \eqref{eq:boundarycase} implies that the limit is $0$ (see Biggins \cite{Biggins1977} and Lyons \cite{Lyons1998}). The discussion over the rate at which $W_n$ converges to $0$ is referred to in the literature as the \emph{Seneta-Heyde norming} for the branching random walk. 

Under the additional assumption
\begin{equation}\label{eq:variance}\sigma^2:=\mathbb{E}\left[\sum_{|u|=1}X_u^2e^{-X_u}\right]\in(0,\infty),\end{equation}
Biggins and Kyprianou  \cite{Biggins2004} showed that $D_{n}$ converges a.s. to a finite nonnegative limit $D_\infty$. We will refer to this setting as the finite variance case.

Aïdékon and Shi \cite{Aidekon2014} proved under \eqref{eq:variance} that $\sqrt{n}W_n$ converges almost surely to $\sqrt{\frac{2}{\pi\sigma^2}}D_\infty$. Moreover, Chen \cite{Chen2015} showed that the limit is non-degenerate if and only if the following assumptions hold (sufficiency was shown before by Aïdékon \cite{Aidekon2013}):
\begin{align}
\mathbb{E}\left[W_{1}\log_{+}^{2}W_{1}\right]&<\infty\label{eq:Wlog2W},\\
\mathbb{E}\left[\tilde{W}_{1}\log_{+}\tilde{W}_{1}\right]&<\infty,
\end{align}
where $\tilde{W}_{1}=\sum_{|u|=1}X_{u}^{+}e^{-X_u}$, $x^+=x\vee 0$ and $\log_+(x)=\log(x\vee 1)$.

He, Liu and Zhang \cite{He2016} then considered a different setting, using the following assumptions:
\begin{align}
\mathbb{E}\left[\sum_{|u|=1}e^{-X_u}\boldsymbol 1_{X_u\le -x}\right]&=o(x^{-\ag})\quad(x\to\infty)\label{eq:lefttailHLZ}\\
\mathbb{E}\left[\sum_{|u|=1}e^{-X_u}\boldsymbol 1_{X_u\ge x}\right]&\sim \frac{c}{x^\ag}\quad(x\to\infty)\label{eq:righttailHLZ}\\
\mathbb{E}\left[W_1\left(\log_+W_1\right)^\ag+\tilde{W}_1\left(\log_{+}\tilde{W}_1\right)^{\ag-1}\right]&<\infty,\label{eq:LlogLHLZ}
\end{align}
where $\ag\in(1,2)$ and $c>0$. Under these assumptions they proved that $D_n$ still converges to $D_\infty$ and $n^{1/\ag}W_n$ converges to $cD_\infty$ with an explicit constant $c>0$, thus extending Aïdékon and Shi's result \cite{Aidekon2014}.

\medskip

Our goal in this article is to significantly generalize the above results by using and developing the toolbox introduced in Boutaud and Maillard \cite{Boutaud2019} for the finite variance case.

\paragraph{Assumptions.}
In what follows, we assume \eqref{eq:supercritical} and $\mathbb{E}\left[\sum_{|u|=1}e^{-X_u}\right]=1$. Let $(S_n)_{n\in\mathbb{N}}$ denote a real-valued random walk with $S_0 = 0$ and the law of its increments given by 
\begin{equation*}
\forall A\text{ measurable set},\quad \mathbb{P}(S_1\in A)=\mathbb{E}\left[\sum_{|u|=1}e^{-X_u}\boldsymbol 1_{X_u\in A}\right].
\end{equation*}
We suppose that there exists $\ag\in(0,2)\backslash\{1\}$ and a sequence $(a_n)_n$, such that $\frac{S_{n}}{a_n}$ converges to an $\ag$-stable distribution as $n\to\infty$, with characteristic function
\begin{equation}\label{eq:caracLevy}
t\mapsto \exp\left(-\lambda|t|^\ag\exp\left(-i\frac{\pi\theta\alpha}{2}\sgn(t)\right)\right),\ \lambda>0,\ |\theta|\le 1\wedge\left(\frac{2}{\alpha}-1\right),\ |\theta|\ne1.
\end{equation}

One consequence of the assumption $\mathbb{E}\left[\sum_{|u|=1}e^{-X_u}\right]=1$ is that the minimum $\min_{|u|=n}X_u$ tends to $\infty$ as $n\to\infty$ almost surely on the event of survival of the branching random walk \cite[Theorem 3]{Biggins1998}. We will use this fact without further mention during the remainder of the article.

The assumptions on the parameters have been chosen such as to match those of Heyde \cite{Heyde1969}, Bingham \cite{Bingham1973a} and Emery \cite{Emery1972}. Note that, under these assumptions, the random walk $(S_n)_n$ oscillates. One could extend our results to more general parameters, for example to $\ag=1$ using recent results by Berger \cite{Berger2018}, but we will not do so here for the sake of simplicity.

The parametrization in \eqref{eq:caracLevy} corresponds to the form (C) from Zolotarev \cite{Zolotarev} and has been chosen here such as to simplify the constants appearing in our results. See below for its relation to other parametrizations of stable laws.

Define the negativity parameter 
\begin{equation}\label{eq:defbarrho}
\bar\rho=\frac{1-\theta}{2}\in(0,1),
\end{equation}
and the positivity parameter $\rho=1-\bar\rho$.

The first theorem introduces the right candidate to replace the derivative martingale $D_n$ defined above. Recall that we use the notation $x^+=x_+=x\vee 0$.

\begin{theorem}\label{th:convergeZn}
Under the assumptions stated above, define
\begin{equation}
Z_n=\sum_{|u|=n}\left(X_u^+\right)^{\alpha\bar\rho}e^{-X_u}.
\end{equation}
Then $Z_n$ converges almost surely to a non-negative limit $Z_\infty$.  If, moreover, the following assumption holds:
\begin{equation}\label{eq:momentsWandZ}
\mathbb{E}\left[W_1\left(\log_{+}W_1\right)^{\alpha}+Z_1\left(\log_{+}Z_1\right)^{\alpha\rho}\right]<\infty,
\end{equation}
then $Z_\infty$ is (strictly) positive almost surely on the event of survival of the branching random walk.
\end{theorem}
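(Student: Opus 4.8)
The plan is to follow the now-standard spine/change-of-measure strategy, adapted to the $\alpha$-stable setting via the renewal-type function $R$ associated with the random walk $(S_n)$.

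\paragraph{Step 1: identify the right harmonic function and martingale.}
First I would recall (or construct, using the results on random walks in the domain of attraction of an $\alpha$-stable law, e.g.\ Doney's and Vatutin--Wachtel's renewal theory) the renewal function $R$ of $(S_n)$ that is harmonic for the walk killed below $0$, and whose behaviour at infinity is $R(x)\sim c\,x^{\alpha\bar\rho}$ as $x\to\infty$ — this is precisely why $\left(X_u^+\right)^{\alpha\bar\rho}$, rather than $X_u^+$, is the correct weight. The point is that $M_n=\sum_{|u|=n}R(X_u)e^{-X_u}$ is a non-negative martingale, hence converges a.s.; and since $R(x)\sim c x^{\alpha\bar\rho}$ while $\min_{|u|=n}X_u\to\infty$, the difference $M_n - cZ_n$ is controlled, giving a.s.\ convergence of $Z_n$ to $Z_\infty:=M_\infty/c\ge 0$. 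This reduces everything to showing $M_\infty>0$ a.s.\ on survival under \eqref{eq:momentsWandZ}.

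\paragraph{Step 2: spine decomposition and the $0$--$1$ law.}
Next I would introduce the size-biased measure $\mathbb{Q}$ obtained by tilting with $M_n$ (equivalently with the additive martingale and then with $R$ along the spine), under which the spine particle moves as the $h$-transform of $(S_n)$ by $R$ — a walk conditioned to stay positive, whose step distribution lies in the domain of attraction of the same stable law. A standard dichotomy (Lyons' method: $\{M_\infty=0\}$ has probability $0$ or $1$ on survival, and $\mathbb{Q}$-a.s.\ $\liminf M_n<\infty$ iff $\mathbb{P}$-a.s.\ $M_\infty>0$ on survival) reduces the problem to a $\mathbb{Q}$-a.s.\ statement: under \eqref{eq:momentsWandZ}, $\liminf_n M_n<\infty$ $\mathbb{Q}$-a.s., whereas without a moment condition one exhibits $M_n\to\infty$ $\mathbb{Q}$-a.s.

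\paragraph{Step 3: Borel--Cantelli estimates along the spine.}
The core is then a quantitative estimate under $\mathbb{Q}$. Writing $\xi_n$ for the spine position, $M_n$ decomposes as a sum over the bushes hanging off the spine plus the spine's own contribution $R(\xi_n)e^{-\xi_n}$. The spine walk conditioned to stay positive satisfies $\xi_n/a_n\to$ (a positive stable-type variable), so $\xi_n$ is typically of order $a_n\asymp n^{1/\alpha}$ (up to slowly varying corrections), and one shows $R(\xi_n)e^{-\xi_n}\to 0$. For the bushes, one uses a truncation argument: the expected contribution to $M_n$ of a bush born at height $y$ from generation-$1$ displacement data is governed by $\mathbb{E}[W_1\mid\cdots]$ and $\mathbb{E}[Z_1\mid\cdots]$, and a Borel--Cantelli / summability computation shows that the moment condition \eqref{eq:momentsWandZ} — with the exponent $\alpha$ on $W_1$ and $\alpha\rho$ on $Z_1$ matching, respectively, the fluctuation scale of the spine and the left-tail/renewal exponent — is exactly what makes $\sum_n \mathbb{Q}(\text{bush at }n\text{ contributes}\gg 1)<\infty$, so that $\liminf M_n<\infty$; the matching lower-bound construction, violating the moment condition, forces $M_n\to\infty$. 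This last step — getting the precise powers of the logarithm right, via the small-deviation behaviour of the conditioned stable walk and careful truncation of the bush sizes — is the main obstacle, and is where the toolbox of \cite{Boutaud2019} (peeling lemmas, many-to-one with the renewal function, and the $\alpha$-stable local estimates assembled earlier in this paper) does the heavy lifting.
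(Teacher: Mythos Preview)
Your overall strategy matches the paper's, but there is one concrete gap and one difference in execution worth flagging.

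\textbf{Step 1 is not correct as written.} The quantity $M_n=\sum_{|u|=n}R(X_u)e^{-X_u}$ is \emph{not} a martingale: $R$ is harmonic only for the walk \emph{killed} upon entering $(-\infty,0)$, so you need the indicator $\boldsymbol{1}_{\min_{v\le u}X_v\ge 0}$. Without it, particles that dip below $0$ and come back destroy the martingale property. The paper handles this exactly as in Kyprianou~(2004): it works with the killed version $D_n'=\sum_{|u|=n}R(X_u)e^{-X_u}\boldsymbol{1}_{\min_{v\le u}X_v\ge0}$ and its shifted ``barrier'' variants $D_n'^{(a)}$, which \emph{are} non-negative martingales for each $a\ge0$, hence converge; then the asymptotics $R(x)\sim c_1x^{\alpha\bar\rho}$ together with $\min_{|u|=n}X_u\to\infty$ transfer the convergence to $Z_n$. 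Your ``$M_n-cZ_n$ is controlled'' step works only after this fix.

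\textbf{Steps 2--3 are the right idea, but the paper packages them differently.} Rather than running Lyons' dichotomy directly and doing a Borel--Cantelli on the bush sizes, the paper invokes Biggins--Kyprianou~(2004), Theorem~2.1(i): uniform integrability of $D_n'$ (hence $\mathbb{E}[D_\infty']=1$) follows once one checks that
\[
\sum_{n\ge1}\mathbb{E}_{X_{\xi_n}}\!\left[Q\big((R(X_{\xi_n})e^{-X_{\xi_n}}Q)\wedge 1\big)\right]<\infty\quad\mathbb{P}^+\text{-a.s.}
\]
This is verified by bounding the summands by a function $g(X_{\xi_n})$ with $\int_0^\infty y^{\alpha-1}g(y)\,dy<\infty$ --- precisely where the moment condition \eqref{eq:momentsWandZ} enters, via a Tauberian lemma --- and then proving a Green-function estimate $\mathbb{E}_x^+\big[\sum_{n\ge0}g(X_{\xi_n})\big]<\infty$ using the duality $R$/$\hat R$ and the renewal bounds $R(x)\lesssim x^{\alpha\bar\rho}$, $\hat R(x)\lesssim x^{\alpha\rho}$. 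This last lemma is the real workhorse and is where the exponents $\alpha$ and $\alpha\rho$ you correctly anticipated actually appear; your sketch gestures at it (``small-deviation behaviour of the conditioned stable walk'') but the paper's route via the explicit renewal representation of the Green kernel is cleaner than a direct Borel--Cantelli on bush contributions.
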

We believe the assumption \eqref{eq:momentsWandZ} to be optimal for this result in the sense that $Z_\infty=0$ almost surely otherwise, similarly to the finite variance case \cite{Chen2015}.

We can now state our main result, the Seneta-Heyde norming of the additive martingale:
\begin{theorem}\label{th:mainresult}
Under the same assumptions as in Theorem~\ref{th:convergeZn}, we have:
\begin{equation}
a_{n}^{\alpha\bar\rho}W_n\underset{n\to\infty}{\longrightarrow}\frac{\kappa}{\lambda^{\bar\rho}} Z_\infty\quad\text{in probability},
\end{equation}
where $\kappa$ is a positive constant depending on $\alpha$ and $\bar\rho$ only (see equation~\eqref{eq:kappameander} below for an expression).
\end{theorem}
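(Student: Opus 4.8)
\medskip

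\noindent\emph{Proof sketch.}
The strategy is to run the truncated second-moment (spine) argument of Aïdékon and Shi \cite{Aidekon2014}, in the form developed in \cite{Boutaud2019} for the finite-variance case, replacing the Brownian inputs by their $\alpha$-stable analogues. Write $\underline X_u=\min_{0\le j\le|u|}X_{u_j}$, fix a low level $\beta>0$ and a truncation parameter $A>0$, and keep a particle $u$ at generation $n$ only if its ancestral trajectory stays above $-\beta$, stays below an upper barrier growing with $A$, and satisfies an auxiliary condition (depending on $A$) on the number of children produced along the way, this last condition being needed for the second-moment bound of the third step; call the resulting sum $W_n^{(\beta,A)}$. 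The first step is to show that $a_n^{\alpha\bar\rho}\bigl(W_n-W_n^{(\beta,A)}\bigr)\to 0$ in probability, uniformly in $n$, as $\beta,A\to\infty$: the particles that ever dip below $-\beta$ are controlled by the event $\{\inf_u X_u<-\beta\}$, which has probability at most $e^{-\beta}$ by Doob's maximal inequality applied to the mean-one martingale $(W_n)$, while the particles that leave the tube from above or violate the child condition are dealt with by a first-moment (many-to-one) estimate together with hypothesis \eqref{eq:momentsWandZ} — and it is here that the heavy tails of the spine increments and exceptional branching events must be absorbed.

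The second step is the first-moment computation. Conditioning on $\FF_k$ for a fixed early generation $k$, the many-to-one lemma shows that $\mathbb E\bigl[W_n^{(\beta,A)}\mid\FF_k\bigr]$ equals $\sum_{|v|=k}e^{-X_v}\,\mathbb P_{X_v}\bigl(S_j\ge-\beta\text{ for }0\le j\le n-k\bigr)$ — the sum running over those $v$ whose own trajectory lies in the tube and $\mathbb P_y$ denoting the law of $(S_j)$ started at $y$ — up to a term whose contribution, after scaling by $a_n^{\alpha\bar\rho}$, vanishes as $A\to\infty$. The crucial analytic input is then the fluctuation-theoretic asymptotics
\[
\mathbb P_y\bigl(S_j\ge-\beta\ \text{ for }0\le j\le m\bigr)\ \sim\ c_1\,V(y+\beta)\,a_m^{-\alpha\bar\rho}\qquad(m\to\infty),
\]
where $V$ is the renewal function of the descending ladder-height process of $(S_j)$, regularly varying of index $\alpha\bar\rho$ with $V(x)\sim c_V\,x^{\alpha\bar\rho}$ — exactly the exponent appearing in $Z_n$. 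Using $a_{n-k}\sim a_n$, letting $n\to\infty$, then $k\to\infty$ (so that the constraints measurable in $\FF_k$ become innocuous, since $\min_{|v|=k}X_v\to\infty$, and $V(X_v+\beta)\sim c_V(X_v^+)^{\alpha\bar\rho}$), then $\beta,A\to\infty$, and invoking Theorem~\ref{th:convergeZn}, one finds that $a_n^{\alpha\bar\rho}\mathbb E\bigl[W_n^{(\beta,A)}\mid\FF_k\bigr]$ converges, in this iterated limit, to $\tfrac{\kappa}{\lambda^{\bar\rho}}Z_\infty$; the product of the constants in the two asymptotics above works out to $\kappa/\lambda^{\bar\rho}$ with $\kappa$ depending on $\alpha$ and $\bar\rho$ only, given by the expression \eqref{eq:kappameander} in which it is written in terms of the density of the $\alpha$-stable meander.

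The third step is concentration: for each fixed $\beta,A,k$, one shows that $a_n^{2\alpha\bar\rho}\,\mathbb E\bigl[(W_n^{(\beta,A)})^2\mid\FF_k\bigr]$ stays bounded and that the conditional variance of $a_n^{\alpha\bar\rho}W_n^{(\beta,A)}$ tends to $0$ as $n\to\infty$. This is a many-to-two computation — two lineages coalesce at some generation $q$ and then evolve independently — whose resulting sum over the coalescence generation $q$ converges precisely because of the two-sided truncation (the upper barrier prevents a pile-up of descendants re-entering the tube from above) together with hypothesis \eqref{eq:momentsWandZ}, whose logarithmic exponents $\alpha$ and $\alpha\rho$ are exactly those required for the corresponding sums to converge. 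Steps~2 and~3 together give, for each fixed $\beta,A$, that $a_n^{\alpha\bar\rho}W_n^{(\beta,A)}$ converges in probability as $n\to\infty$ to a limit tending to $\tfrac{\kappa}{\lambda^{\bar\rho}}Z_\infty$ as $\beta,A\to\infty$; combining this with the first step by an $\eps/3$-argument yields $a_n^{\alpha\bar\rho}W_n\to\tfrac{\kappa}{\lambda^{\bar\rho}}Z_\infty$ in probability.

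The main obstacle is twofold. On the analytic side one needs the ``conditioned to stay positive'' estimates for $(S_j)$ — the asymptotics displayed above and the local refinements they require, with the exact meander constant — in the general domain-of-attraction regime rather than only for a pure stable walk; this rests on, and must sharpen, results in the spirit of Caravenna--Chaumont, Vatutin--Wachtel and Doney, and it is what pins down both the normalisation $a_n^{\alpha\bar\rho}$ and the constant $\kappa$. On the probabilistic side, every first- and second-moment estimate must be made uniform in $n$ while surviving the heavy-tailed spine increments — in particular a single large jump — which is what forces the two-sided truncation with the auxiliary child condition and makes \eqref{eq:momentsWandZ} both natural and, we expect, optimal; it is also the reason the conclusion is stated in probability rather than almost surely.
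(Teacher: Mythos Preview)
Your sketch follows the Aïdékon--Shi truncated second-moment route, not the paper's actual method. (Despite citing \cite{Boutaud2019}, what you describe is the \cite{Aidekon2014} argument; the point of \cite{Boutaud2019} is precisely to bypass second moments.) The paper never does a many-to-two computation: it fixes $k_0$, writes the conditional Laplace transform $\mathbb{E}[\exp(-s a_n^{\alpha\bar\rho}W_{n,k_0}'')\mid\FF_{k_0}]$ as a product over $|u|=k_0$, bounds it below by Jensen and above via the elementary inequality $e^{-sx}\le 1-s'x$ on $[0,\eps)$, and is left with a \emph{truncated first moment} $\mathbb{E}_x[a_n^{\alpha\bar\rho}W_n'\boldsymbol 1_{a_n^{\alpha\bar\rho}W_n'\ge\eps}]$, controlled through the spine and the Green-function estimate of Lemma~\ref{lem:GreenEstimate}; convergence of the Laplace transform is then upgraded to convergence in probability by a general lemma. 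Your Step~2 matches the paper's first-moment input, but your Step~3 (two-sided barrier, child condition, variance bound) is entirely absent from the paper. What the paper's route buys is that only the crude tail asymptotic of Theorem~\ref{th:ScalingMin} and the renewal bound \eqref{eq:boundR} are needed --- no upper barrier, no local limit refinements --- which is exactly what lets the argument go through cleanly in full domain-of-attraction generality. Your route is plausible (He--Liu--Zhang \cite{He2016} carry it out under the stronger hypotheses \eqref{eq:lefttailHLZ}--\eqref{eq:LlogLHLZ}), but the second-moment bound in the present generality would require conditioned-walk estimates that are not supplied here and may be genuinely delicate when the spine has heavy tails on both sides.
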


\paragraph{Comments on the derivative martingale.}
Note that if $\ag\bar\rho=1$, since $\min_{|u|=n}X_u\to\infty$ as $n\to\infty$, $Z_n$ and $D_n$ have the same limit. So in that case, $D_n$ is the right quantity to study the convergence of $W_n$. However, when $\ag\bar\rho<1$, $D_n$ is no longer the right quantity and will in fact tend to $\infty$ almost surely.

\paragraph{Comments on the expression of $\kappa$.}
Let $\mathcal{X}=(\mathcal{X}_t)_{t\ge0}$ be the $\alpha$-stable Lévy process starting at 0 and such that $\mathcal{X}_1$ has characteristic function given by \eqref{eq:caracLevy}, with $\lambda=1$. One can give an expression of $\kappa$ in terms of $\mathcal{X}$. Let $\mathbb{P}^{(m)}$ be the law of the meander of length $1$, associated with $\mathcal{X}$. That is
\begin{equation}
\mathbb{P}^{(m)}\left((\mathcal{X}_t)_{t\in[0,1]}\in A\right)=\lim_{x\to0}\mathbb{P}\left((\mathcal{X}_t+x)_{t\in[0,1]}\in A\mid \inf_{t\in[0,1]}X_t+x\ge0\right).
\end{equation}
Then, we have
\begin{equation}\label{eq:kappameander}
\kappa=\frac{1}{\mathbb{E}^{(m)}\left[\mathcal{X}_1^{\alpha\bar\rho}\right]}.
\end{equation}
We show in Section \ref{sec:computingkappa} that $\kappa$ has a simple explicit expression when $\alpha\in (1,2]$ and the Lévy process $(\mathcal{X}_t)_{t\ge0}$ has no positive jumps, i.e.~when $\alpha\rho=1$:
\begin{equation}\label{eq:kappaexplicit}
\kappa=\frac{1}{\Gamma(\alpha)\Gamma(1/\ag)}.
\end{equation}

\paragraph{Another parametrization of stable laws.}
In the literature one usually parametrizes the stable laws defined above such that the characteristic function is of the form
\begin{equation}\tag{A}\label{eq:formA}
t\mapsto\exp\left(-\lambda'|t|^\ag(1-i\sgn(t)\beta\tan(\pi\alpha/2)\right),
\end{equation}
with $\ag\in(0,2)\backslash\{1\}$, $\beta\in[-1,1]$, $\lambda'>0$, and $|\beta|<1$ if $\ag<1$. This corresponds to form (A) in Zolotarev \cite{Zolotarev}. The parameters are related in the following way:
\begin{align}
\beta&=\cot\left(\frac{\pi\ag}{2}\right)\tan\left(\frac{\pi\theta\ag}{2}\right)\label{eq:betatheta}\\
\lambda'&=\lambda\cos\left(\frac{\pi\theta\alpha}{2}\right).\label{eq:lambda'lambda}
\end{align}
This gives an expression of the negativity parameter in terms of the parameters of form \eqref{eq:formA}:
\begin{equation}
\bar\rho=\frac{1}{2}-\frac{1}{\pi\ag}\arctan\left(\beta\tan\left(\frac{\pi\ag}{2}\right)\right).
\end{equation}

\subsection*{Illustration of the assumptions}

Many aspects of the asymptotic behavior of branching random walks are encoded in the following function \cite{Biggins1977a}:
$$\varphi(t)=\log\mathbb{E}\left[\sum_{|u|=1}e^{-tX_u}\right],\quad t\in\mathbb R$$
Note that the function $t\mapsto \varphi(t+1)$ is the log-Laplace transform of $S_1$. 
In this section, we illustrate the assumptions from this article in terms of this function.

Recall that, under the assumptions stated above, we only consider the cases where the random walk $(S_n)_n$ oscillates and we exclude the case $\alpha=1$ which would require a specific treatment. To be more precise, we consider the following cases:
\begin{enumerate}
\item[(a)] the finite variance case $\alpha=2$,
\item[(b)] $\alpha\in(1,2)$ and $\mathcal{X}$ has no positive jumps,
\item[(b')] $\alpha\in(1,2)$ and $\mathcal{X}$ has no negative jumps,
\item[(c)] $\alpha\in(0,2)\backslash\{1\}$ and $\mathcal{X}$ has positive and negative jumps.
\end{enumerate}

\begin{figure}[ht]
	\centering
	\begin{minipage}{0.45\textwidth}
	\centering
		\begin{tikzpicture}
			\begin{small}
				\def\r{0.5}
				\draw[thin, ->]	(-2,0) -- (2,0);
				\draw[thin, ->]	(0,0) -- (0,4*\r+.5);
				\foreach \x in {0}
					\draw (\x,.05)--(\x,-.05) node[below] {$1$};
				\draw [very thick, domain=-1.5:1.5] plot (\x, {\r*\x*\x});
			\end{small}
		\end{tikzpicture}
	\caption{Case (a)}
	\end{minipage}\hfill
	\begin{minipage}{0.45\textwidth}
	\centering
		\begin{tikzpicture}
			\begin{small}
				\def\r{0.5}
				\draw[thin, ->]	(-2,0) -- (2,0);
				\draw[thin, ->]	(0,0) -- (0,4*\r+.5);
				\foreach \x in {0}
					\draw (\x,.05)--(\x,-.05) node[below] {$1$};
				\draw (.05,4*\r)--(-.05,4*\r) node[above left] {$\infty$} [dotted]	(-2,4*\r) -- (2,4*\r);
				\draw [very thick, domain=-1.5:0] plot (\x, {\r*\x*\x});
				\draw [very thick, domain=0:2] plot (\x, 4*\r);
			\end{small}
		\end{tikzpicture}
	\caption{Case (b)}
	\end{minipage}\hfill
\end{figure}

\begin{figure}[ht]
\centering
	\begin{minipage}{0.45\textwidth}
		\centering
		\begin{tikzpicture}
			\begin{small}
				\def\r{0.5}
				\draw[thin, ->]	(-2,0) -- (2,0);
				\draw[thin, ->]	(0,0) -- (0,4*\r+.5);
				\foreach \x in {0}
					\draw (\x,.05)--(\x,-.05) node[below] {$1$};
				\draw (.05,4*\r)--(-.05,4*\r) node[above left] {$\infty$} [dotted]	(-2,4*\r) -- (2,4*\r);
				\draw [very thick, domain=-2:0] plot (\x, 4*\r);
				\draw [very thick, domain=0:1.5] plot (\x, {\r*\x*\x});
			\end{small}
		\end{tikzpicture}
	\caption{Case (b')}
	\end{minipage}\hfill
	\begin{minipage}{0.45\textwidth}
		\centering
		\begin{tikzpicture}
			\begin{small}
			\def\r{0.5}
			\draw[thin, ->]	(-2,0) -- (2,0);
			\draw[thin, ->]	(0,0) -- (0,4*\r+.5);
			\foreach \x in {0}
				\draw (\x,.05)--(\x,-.05) node[below] {$1$};
			\draw (.05,4*\r)--(-.05,4*\r) node[above left] {$\infty$} [dotted]	(-2,4*\r) -- (2,4*\r);
			\draw [very thick, domain=-2:0] plot (\x, 4*\r);
			\node at (0,0)[circle,fill,inner sep=1.5pt]{};
			\draw [very thick, domain=0:2] plot (\x, 4*\r);
			\draw[fill=white] (0,4*\r) circle (2pt);
			\end{small}
		\end{tikzpicture}
		\caption{Case (c)}
	\end{minipage}\hfill
\end{figure}

\begin{figure}[ht]
	\centering
	\begin{minipage}{0.45\textwidth}
		\centering
		\begin{tikzpicture}
			\begin{small}
				\def\r{0.5}
				\draw[thin, ->]	(-2,0) -- (2,0);
				\draw[thin, ->]	(0,0) -- (0,4*\r+.5);
				\foreach \x in {0}
					\draw (\x,.05)--(\x,-.05) node[below right] {$1$};
				\draw (.05,4*\r)--(-.05,4*\r) node[above left] {$\infty$} [dotted]	(-2,4*\r) -- (2,4*\r);
				\draw [very thick, domain=-2:0] plot (\x, {\r*4*(-sqrt(-\x)+0.5*\x*\x});
				\draw [very thick, domain=0:2] plot (\x, 4*\r);
			\end{small}
		\end{tikzpicture}
	\caption{$\alpha < 1$, no negative jumps (not considered here)}
	\end{minipage}\hfill
	\begin{minipage}{0.45\textwidth}
		\centering
		\begin{tikzpicture}
			\begin{small}
				\def\r{0.5}
				\draw[thin, ->]	(-2,0) -- (2,0);
				\draw[thin, ->]	(0,0) -- (0,4*\r+.5);
				\foreach \x in {0}
					\draw (\x,.05)--(\x,-.05) node[below] {$1$};
				\draw (.05,4*\r)--(-.05,4*\r) node[above left] {$\infty$} [dotted]	(-2,4*\r) -- (2,4*\r);
				\draw [very thick, domain=-1.5:0] plot (\x, {\r*\x*(\x-1)});
				\draw [very thick, domain=0:2] plot (\x, 4*\r);
			\end{small}
		\end{tikzpicture}
	\caption{$\alpha>1$ and $\mathbb{E}[S_1]\ne 0$ (not considered here)}
	\end{minipage}
\end{figure}

These cases are illustrated in Figures 1 to 4. These contain schematic plots of the function $\varphi$ which give rise to each of the four cases (a),(b),(b'),(c) stated above. Note that even in cases (a),(b),(b'), the function $\varphi$ might be infinite at any point $t\ne 1$ --- however, in all cases it may not be finite at points at which the function is infinite as plotted. Indeed, if $\varphi(t)<\infty$ for some $t<1$ ($t>1$), then the law of $S_1$ must have exponentially decaying right (left) tails, which implies that the stable process has no positive (negative) jumps.

While cases (b) and (b') are natural cases to consider, the case (c) might seem degenerate. Indeed, the growth of the branching random walk at exponential scale is entirely determined by the function $\varphi$ through its Fenchel-Legendre transform \cite{Biggins1977a}, which is here simply a linear function. However, such branching random walks might arise naturally as limits of certain sequences of branching random walks, though we are not aware of any specific example. In all cases (b),(b') and (c), the finer asymptotic behavior of such BRWs has not been considered before to our knowledge and is therefore open for investigation.

We exclude in this article the cases where $(S_n)$ drifts towards $+\infty$ or $-\infty$, which is left open for investigation. This can happen either when $\alpha < 1$ and $\mathcal{X}$ has only positive or only negative jumps or when $\alpha \in (1,2]$ and $\mathbb{E}[S_1]\ne 0$. The behavior of the function $\varphi$ in these two cases ist schematically depicted in Figures 5 and 6, respectively. The asymptotic of the minimum in the latter case has been studied in \cite{Barral2014}.

\paragraph{Overview of the article.}
Throughout the remainder of the article, we suppose $\lambda=1$ which can be obtained by replacing $a_n$ by $a_n\lambda^{1/\ag}$.
The article is organized as follows. Section~\ref{sec:spine} contains preliminaries about the spinal decomposition and the associated many-to-one formula. Section~\ref{sec:renewalandminimum} contains the definition of a renewal function, description of its behaviour and how it impacts the tail of the minimum of the random walk $(S_n)_n$ as well as a truncated first moment. Section~\ref{sec:proofZnconverge} contains the proof of Theorem~\ref{th:convergeZn}. Section~\ref{sec:proofmain} contains the proof of our main result (Theorem~\ref{th:mainresult}) and the proof of two lemmas are deferred to Section~\ref{sec:proofT1T2}. Finally, Section~\ref{sec:computingkappa} contains the calculations to obtain the explicit expression of $\kappa$ in \eqref{eq:kappaexplicit}.

\paragraph{Acknowledgements}
We thank Vladimir Vatutin for discussions on random walks and for bringing to our attention the article \cite{Vatutin2017}.

\section{The spinal decomposition}\label{sec:spine}

In this section, we recall a change of measure and an associated spinal decomposition of the BRW due to Lyons~\cite{Lyons1998}. It will be helpful to allow the initial particle of the BRW to sit at an arbitrary position $x\in\mathbb{R}$, this will be denoted by adding the subscript $x$ as in $\mathbb{P}_x$ and $\mathbb{E}_x$ (if $x=0$, the subscript is ignored). Then $(W_n)_{n\ge0}$ is still a non-negative martingale with $W_0 = e^{-x}$. Denote by $\mathcal{F}_n=\sigma(X_u,|u|\le n)$ the canonical filtration of the BRW and define $\FF_{\infty}=\bigvee_{n\ge0}\FF_{n}$. Using Kolmogorov's extension theorem, for every $x\in\mathbb{R}$, there exists a probability measure $\mathbb{P}^*_{x}$ on $\FF_{\infty}$ such that for every generation $n\ge 0$,
\begin{equation}
\frac{d\mathbb{P}^{*}_{x}}{d\mathbb{P}_x}\Big|_{\FF_{n}}=e^{x}W_{n}.
\end{equation}

Following Lyons \cite{Lyons1998} we see $\mathbb{P}_x^*$ as the projection to $\FF_{\infty}$ of a probability (also denoted $\mathbb{P}_x^*$) defined on a bigger probability space equipped with a so-called \emph{spine}, a distinguished ray in the tree. We will denote the vertex on the spine at generation $n$ by $\xi_{n}$ and its position by $X_{\xi_n}$. The spinal BRW evolves as follows under $\mathbb{P}^*_{x}$:
\begin{itemize}
\item Start at generation 0 with one particle $\xi_0$ at position $x$.
\item At generation $n$, all particles except $\xi_{n}$ reproduce according to the point process $\Theta$ and $\xi_{n}$ reproduces according to the size-biased reproduction law $\Theta^*$ defined by
\[
\frac{d\Theta^*}{d\Theta}(x_1,x_2,\ldots) = \sum_{i\ge1} e^{-x_i}.
\]
\item The spine at generation $n+1$ is chosen amongst the children $u$ of $\xi_{n}$ with probability proportional to $e^{-X_{u}}$.
\end{itemize}
The following \emph{many-to-one formula} can be deduced from Lyons \cite{Lyons1998}, see e.g.~Aïdékon~\cite{Aidekon2013}.
\begin{proposition}[Many-to-one formula]\label{prop:MTOformula}
For any $x\ge 0$, $n\in\mathbb{N}=\{0,1,\dots\}$ and every family $\left(H_{n}(u)\right)_{u\in\UU}$ of $\FF_{n}$-measurable non-negative random variables, one has
\begin{equation}
\mathbb{E}_{x}\left[\sum_{|u|=n}e^{-X_{u}}H_{n}(u)\right]=e^{-x}\mathbb{E}_{x}^{*}\left[H_{n}(\xi_{n})\right].
\end{equation}
\end{proposition}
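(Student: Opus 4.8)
The plan is to prove the formula by induction on $n$, the inductive engine being a ``one-step'' many-to-one identity extracted from the explicit spinal dynamics under $\mathbb{P}_x^{*}$ recalled above. Write $(P_n)$ for the assertion of the Proposition at generation $n$, for all $\FF_n$-measurable nonnegative families $\left(H_n(u)\right)_{u\in\UU}$ and all starting points $x$. The base case $(P_0)$ is immediate: $\xi_0$ is the root, the change of measure is trivial on $\FF_0$ since $e^{x}W_0=1$, and $\sum_{|u|=0}e^{-X_u}H_0(u)=e^{-x}H_0(\text{root})=e^{-x}\mathbb{E}_x^{*}\!\left[H_0(\xi_0)\right]$.

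For the inductive step, I would condition on the $\sigma$-field $\GG_n:=\FF_n\vee\sigma(\xi_0,\dots,\xi_n)$ generated by the first $n$ generations together with the spine up to time $n$, and argue on the event $\{\xi_n=v\}$. Under $\mathbb{P}_x^{*}$ and given $\GG_n$, the particle $v$ reproduces according to the (translated) size-biased law $\Theta^{*}$, the remaining generation-$n$ particles reproduce according to the (translated) law $\Theta$, and $\xi_{n+1}$ is then the child $u$ of $v$ selected with probability $e^{-X_u}\big/\sum_{u'} e^{-X_{u'}}$, the sum being over the children of $v$. The crucial algebraic point is the identity $\tfrac{d\Theta^{*}}{d\Theta}(x_1,x_2,\dots)=\sum_i e^{-x_i}$: combining the resulting Radon--Nikodym density for $v$'s reproduction (which in terms of absolute positions equals $e^{X_v}\sum_{u'}e^{-X_{u'}}$) with the spine-selection probability, the normalising sum cancels, leaving, for every $\FF_{n+1}$-measurable nonnegative family $\left(H_{n+1}(u)\right)_u$,
\[
\mathbb{E}_x^{*}\bigl[H_{n+1}(\xi_{n+1})\bigm|\GG_n\bigr]=e^{X_{\xi_n}}\,\psi_{\xi_n},
\qquad
\psi_v:=\mathbb{E}_x\biggl[\sum_{u\text{ child of }v}e^{-X_u}H_{n+1}(u)\biggm|\FF_n\biggr],
\]
where the inner conditional expectation is taken under the original measure $\mathbb{P}_x$ and $(\psi_v)_{|v|=n}$ is an $\FF_n$-measurable family. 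Taking $\mathbb{P}_x^{*}$-expectations and applying the induction hypothesis $(P_n)$ to the $\FF_n$-measurable family $\left(e^{X_v}\psi_v\right)_{|v|=n}$ gives
\[
e^{-x}\,\mathbb{E}_x^{*}\bigl[H_{n+1}(\xi_{n+1})\bigr]=\mathbb{E}_x\biggl[\sum_{|v|=n}e^{-X_v}\,e^{X_v}\psi_v\biggr]=\mathbb{E}_x\biggl[\sum_{|v|=n}\mathbb{E}_x\Bigl[\sum_{u\text{ child of }v}e^{-X_u}H_{n+1}(u)\Bigm|\FF_n\Bigr]\biggr].
\]
Since every generation-$(n+1)$ vertex has a unique parent at generation $n$, the inner double sum is $\sum_{|u|=n+1}e^{-X_u}H_{n+1}(u)$, and the tower property then gives $e^{-x}\mathbb{E}_x^{*}[H_{n+1}(\xi_{n+1})]=\mathbb{E}_x\bigl[\sum_{|u|=n+1}e^{-X_u}H_{n+1}(u)\bigr]$, which is $(P_{n+1})$ and closes the induction.

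I expect the only real work to be careful bookkeeping rather than any analytic subtlety: one must keep track of which conditional expectations are under $\mathbb{P}_x$ and which under $\mathbb{P}_x^{*}$, justify the conditional-independence statements built into the spinal dynamics (notably that, given $\GG_n$ and the generation-$(n+1)$ positions, the choice of $\xi_{n+1}$ is independent of everything else), and verify the one-step change-of-measure density for a single reproduction event of the spine expressed in absolute positions; the interchanges of sum and expectation are legitimate by monotone convergence since all quantities are nonnegative. Alternatively, as the authors indicate, the statement can simply be quoted from Lyons~\cite{Lyons1998} or from the exposition in Aïdékon~\cite{Aidekon2013}.
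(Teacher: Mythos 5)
Your induction is correct, but note that the paper itself offers no proof of this proposition: it is quoted from Lyons \cite{Lyons1998} (see also A\"{\i}d\'ekon \cite{Aidekon2013}), so there is no ``paper proof'' to match. Your argument is a sound self-contained derivation from the spinal dynamics: the one-step identity $\mathbb{E}_x^{*}[H_{n+1}(\xi_{n+1})\mid\GG_n]=e^{X_{\xi_n}}\psi_{\xi_n}$ is exactly what the cancellation between the size-biasing density $\sum_i e^{-x_i}=e^{X_v}\sum_{u'}e^{-X_{u'}}$ and the spine-selection weights $e^{-X_u}/\sum_{u'}e^{-X_{u'}}$ gives, and feeding the $\FF_n$-measurable family $(e^{X_v}\psi_v)_{|v|=n}$ back into $(P_n)$ closes the induction. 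The one point worth making explicit is that $H_{n+1}(u)$ may depend on the offspring of \emph{all} generation-$n$ particles, so you need that the conditional law of $\FF_{n+1}$ given $\GG_n$ under $\mathbb{P}_x^{*}$ differs from that under $\mathbb{P}_x$ only through the density attached to the spine particle's brood, the other broods being i.i.d.\ $\Theta$ under both measures; you flag this as bookkeeping and it is indeed the only thing to check. For comparison, the standard route in Lyons is shorter and non-inductive: one uses the two facts $\frac{d\mathbb{P}_x^{*}}{d\mathbb{P}_x}\big|_{\FF_n}=e^xW_n$ and $\mathbb{P}_x^{*}(\xi_n=u\mid\FF_n)=e^{-X_u}/W_n$, whence
\begin{equation*}
\mathbb{E}_x^{*}\left[H_n(\xi_n)\right]=\mathbb{E}_x^{*}\left[\sum_{|u|=n}\frac{e^{-X_u}}{W_n}H_n(u)\right]=\mathbb{E}_x\left[e^xW_n\sum_{|u|=n}\frac{e^{-X_u}}{W_n}H_n(u)\right]=e^x\,\mathbb{E}_x\left[\sum_{|u|=n}e^{-X_u}H_n(u)\right];
\end{equation*}
of course the second fact is itself usually proved by the very induction you carry out, so the two approaches have essentially the same content, yours merely unrolling it.
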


The spinal decomposition implies that the process $(X_{\xi_{n}})_{n\in\mathbb{N}}$ follows the law of a random walk under $\mathbb{P}^*_{x}$ (whose increments do not depend on $x$), which we will refer to as the spinal random walk. It has the same law as the random walk $(S_n)_n$ from the introduction (with $S_0 = x$). In particular, by assumption, $(X_{\xi_n})_n$ is in the domain of attraction of an $\ag$-stable process.

\section{Renewal function and the tail of the minimum}\label{sec:renewalandminimum}

Let $R$ be the renewal function associated with the strictly descending ladder heights of the random walk $(S_n)$. Then $R$ is harmonic for the sub-Markov process obtained by killing $(S_n)$ when entering $(-\infty,0)$ \cite{Tan1989}, i.e.~
\begin{equation}\label{eq:Rharmonic}
\forall x\ge0, R(x)=\mathbb{E}_x\left[R(S_1)\boldsymbol 1_{S_1\ge0}\right].
\end{equation}

We proceed by giving some bounds and asymptotical results on the renewal function $R$. These results come from the works of Heyde \cite{Heyde1969}, Bingham \cite{Bingham1973a} and Emery \cite{Emery1972}. We remark that they use the parametrization \eqref{eq:formA} of stable laws, with moreover $\beta$ replaced by $-\beta$. Using the formulae \eqref{eq:betatheta} and \eqref{eq:lambda'lambda}, one can easily translate their results in terms of the parameters $\theta$ and $\lambda$. Recall that in this section and the following sections, we assume that $\lambda=1$.

From the definition of the parameters, we can easily see that $\ag\bar\rho\le 1$. Heyde showed in \cite[equation (15)]{Heyde1969} that
\begin{equation}\label{eq:defc1}
s^{-\ag\bar\rho}\left(1-\mathbb{E}[\exp(-sH_1)]\right)\underset{s\to0}{\longrightarrow}1.
\end{equation}
where $H_1$ is the first strictly descending ladder height of the walk. To read this off Heyde's formula, note that $\lambda'\sqrt{1+\beta^2\tan^2(\pi\ag/2)}=\lambda=1$, by the trigonometric identity $1+\tan^2=1/\cos^2$.

We distinguish between the cases $\ag\bar\rho=1$ and $\ag\bar\rho<1$ (recall that $\ag\bar\rho=1$ occurs if and only if the Lévy process $\mathcal{X}$ has no negative jumps).
Following Bingham \cite[end of section 5]{Bingham1973}, if $\ag\bar\rho=1$, then $\mathbb E[H_1]=1$ and the key renewal theorem gives us
\begin{equation*}
R(x)\sim x\quad(x\to\infty).
\end{equation*}
On the other hand, if $\ag\bar\rho<1$, then $H_1$ has infinite mean and lies in the domain of attraction of a stable subordinator and
\begin{equation*}
\mathbb{P}(H_1\ge x)\sim \frac{1}{x^{\ag\bar\rho}\Gamma(1-\ag\bar\rho)},
\end{equation*}
which gives, by Feller (\cite{Feller1971} XIV (3.3)):
\begin{equation*}
R(x)\sim \frac{x^{\ag\bar\rho}}{\Gamma(1+\ag\bar\rho)}.
\end{equation*}

Since putting $\ag\bar\rho=1$ in this last expression provides the right asymptotics for that case, we can define $c_{1}=\frac{1}{\Gamma(1+\ag\bar\rho)}$, such that in both cases, as $x\to\infty$,
\begin{equation}\label{eq:eqR}
R(x)\sim c_{1}x^{\ag\bar\rho},
\end{equation}
and as a consequence, there exists $c_{1}'>0$ such that for all $x\ge 0$,
\begin{equation}\label{eq:boundR}
R(x)\le c_{1}'(1+x)^{\ag\bar\rho}\le c_{1}'(1+x)
\end{equation}

From \eqref{eq:boundR}, we easily obtain the following lemma:
\begin{lemma}
\label{lem:sousadditif}
For all $x,y\in\mathbb{R}$, we have
\begin{equation*}
R(x+y)\le c_{1}'(1+x^+)^{\ag\bar\rho}(1+y^+)^{\ag\bar\rho}\le c_{1}'(1+x^+)(1+y^+).
\end{equation*}
\end{lemma}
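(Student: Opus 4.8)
The plan is to reduce the two-variable bound to the one-variable bound~\eqref{eq:boundR} by a simple monotonicity-and-splitting argument. First I would observe that the renewal function $R$ associated with strictly descending ladder heights is non-decreasing on $\mathbb{R}$ (it is constant, equal to $R(0)$, on $(-\infty,0]$ since the walk starting at a negative point has already exited $[0,\infty)$), so for any $x,y\in\mathbb{R}$ we have $R(x+y)\le R(x^++y^+)$. Thus it suffices to prove the inequality for $x,y\ge 0$, where $x^+=x$ and $y^+=y$.

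Next, for $x,y\ge0$ I would apply~\eqref{eq:boundR} at the point $x+y$ to get $R(x+y)\le c_1'(1+x+y)^{\alpha\bar\rho}$, and then compare $1+x+y$ with $(1+x)(1+y)=1+x+y+xy$. Since $xy\ge0$, we have $1+x+y\le(1+x)(1+y)$, and because $\alpha\bar\rho\in(0,1]$ the function $t\mapsto t^{\alpha\bar\rho}$ is non-decreasing on $[0,\infty)$, giving $(1+x+y)^{\alpha\bar\rho}\le\big((1+x)(1+y)\big)^{\alpha\bar\rho}=(1+x)^{\alpha\bar\rho}(1+y)^{\alpha\bar\rho}$. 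Combining these yields the first claimed inequality $R(x+y)\le c_1'(1+x^+)^{\alpha\bar\rho}(1+y^+)^{\alpha\bar\rho}$. The second inequality in the statement is then immediate: since $1+x^+\ge1$ and $1+y^+\ge1$ and $\alpha\bar\rho\le1$, we have $(1+x^+)^{\alpha\bar\rho}\le(1+x^+)$ and likewise for $y$, so the product of the $\alpha\bar\rho$-powers is bounded by the product $(1+x^+)(1+y^+)$.

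There is essentially no obstacle here: the only points requiring a word of justification are the monotonicity of $R$ (standard for renewal functions, and consistent with the convention implicit in~\eqref{eq:Rharmonic} that $R$ is defined and harmonic on $[0,\infty)$ and extended by its boundary value on $(-\infty,0)$) and the elementary inequality $1+x+y\le(1+x)(1+y)$ for $x,y\ge0$. Everything else is a direct consequence of~\eqref{eq:boundR} and the fact that $\alpha\bar\rho\in(0,1]$, which was established just above~\eqref{eq:defc1}.
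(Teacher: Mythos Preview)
Your proof is correct and follows exactly the route the paper intends: the paper simply states that the lemma ``easily follows from~\eqref{eq:boundR}'', and your argument---monotonicity of $R$ to reduce to $x,y\ge0$, then $1+x+y\le(1+x)(1+y)$ combined with $\alpha\bar\rho\le1$---is precisely the natural way to unpack that. One minor point: the standard convention here is $R(z)=0$ for $z<0$ (since the renewal measure is supported on $[0,\infty)$), not $R(z)=R(0)$, but this only makes the inequality easier and does not affect your argument.
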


The corresponding results hold for $\hat{R}$, the renewal function associated with the striclty ascending ladder heights, by replacing $\bar\rho$ by $\rho$.

We now turn to the tail of the minimum $\min_{k\le n} S_k$. We have the following theorem:

\begin{theorem}\label{th:ScalingMin}
Under the assumptions on $(S_n)$ stated in the introduction, and recalling the definition of $\bar\rho$ in \eqref{eq:defbarrho}, 
we have for every $x\ge0$ that
\begin{equation}\label{eq:minasymptotics}
\mathbb{P}_x\left(\min_{k\le n}S_k\ge 0\right)\sim\frac{K}{a_n^{\alpha\bar\rho}}R(x)\quad(n\to\infty),
\end{equation}
where $K=\kappa/c_{1}$, with $\kappa$ defined in \eqref{eq:kappameander}. In particular, for all $n\ge 0$,
\begin{equation}\label{eq:boundmin}
\mathbb{P}_x\left(\min_{k\le n}S_k\ge 0\right)\le\frac{c_2K}{a_n^{\alpha\bar\rho}}R(x)\quad(n\to\infty),
\end{equation}
where $c_2>0$ is a constant.
\end{theorem}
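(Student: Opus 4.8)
The plan is to combine the harmonicity of the renewal function $R$ with the invariance principle for random walks conditioned to stay non-negative, in which the stable meander appears as the scaling limit of the conditioned walk.

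Fix $x\ge0$ and set $\tau=\tau_x=\inf\{k\ge1:S_k<0\}$ under $\mathbb P_x$ (so $S_0=x$); since $x\ge0$ we have $\{\tau>n\}=\{\min_{k\le n}S_k\ge0\}$. The first step is the exact identity $\mathbb E_x[R(S_n)\mathbf 1_{\{\tau>n\}}]=R(x)$ for all $n\ge0$: indeed $\{\tau>n+1\}=\{\tau>n\}\cap\{S_{n+1}\ge0\}$ and $S_n\ge0$ on $\{\tau>n\}$, so by the Markov property and \eqref{eq:Rharmonic},
\begin{equation*}
\mathbb E_x\!\left[R(S_{n+1})\mathbf 1_{\{\tau>n+1\}}\right]=\mathbb E_x\!\left[\mathbf 1_{\{\tau>n\}}\,\mathbb E_{S_n}\!\left[R(S_1)\mathbf 1_{\{S_1\ge0\}}\right]\right]=\mathbb E_x\!\left[R(S_n)\mathbf 1_{\{\tau>n\}}\right],
\end{equation*}
and induction (base case $S_0=x$, $\tau>0$) gives the claim. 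As $R>0$ on $[0,\infty)$ this forces $\mathbb P_x(\tau>n)>0$ and rearranges to
\begin{equation*}
\mathbb P_x\Big(\min_{k\le n}S_k\ge0\Big)=\frac{R(x)}{\mathbb E_x\!\left[R(S_n)\mid\tau>n\right]},
\end{equation*}
so that \eqref{eq:minasymptotics} reduces to proving $a_n^{-\alpha\bar\rho}\,\mathbb E_x[R(S_n)\mid\tau>n]\to c_1/\kappa$.

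Next I would invoke the invariance principle for random walks conditioned to stay non-negative (e.g.\ via Caravenna--Chaumont, or Vatutin--Wachtel), which applies since $(S_n)$ is attracted to $\mathcal X$, oscillates, and starts from $x=o(a_n)$: under $\mathbb P_x(\cdot\mid\tau>n)$ the rescaled path converges in the Skorokhod sense to the stable meander of length $1$, hence (a fixed time being a.s.\ a continuity point of the limit) $S_n/a_n$ converges in law to $\mathcal X_1$ under $\mathbb P^{(m)}$. Writing $h_n(y)=a_n^{-\alpha\bar\rho}R(a_ny)$, the regular variation \eqref{eq:eqR} and the uniform convergence theorem for regularly varying functions give $h_n\to h$ locally uniformly on $[0,\infty)$ with $h(y)=c_1y^{\alpha\bar\rho}$, while \eqref{eq:boundR} and Lemma~\ref{lem:sousadditif} provide the domination $0\le h_n(y)\le c_1'(1+y)^{\alpha\bar\rho}$ for large $n$. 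Skorokhod's representation together with Fatou's lemma then yield
\begin{equation*}
\liminf_{n\to\infty}a_n^{-\alpha\bar\rho}\,\mathbb E_x[R(S_n)\mid\tau>n]\ \ge\ c_1\,\mathbb E^{(m)}\!\left[\mathcal X_1^{\alpha\bar\rho}\right]=\frac{c_1}{\kappa},
\end{equation*}
where $\mathbb E^{(m)}[\mathcal X_1^{\alpha\bar\rho}]\in(0,\infty)$ (so $\kappa\in(0,\infty)$) since $\mathcal X_1>0$ $\mathbb P^{(m)}$-a.s.\ and $\alpha\bar\rho<\alpha$, so the $x^{-\alpha}$ right tail of the meander is integrable to the power $\alpha\bar\rho$. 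Via the identity of the first step this already gives $\limsup_n a_n^{\alpha\bar\rho}\mathbb P_x(\min_{k\le n}S_k\ge0)\le KR(x)$, and hence \eqref{eq:boundmin} after bounding the probability by $\mathbb P_x(S_1\ge0)\le R(x)$ on any fixed finite range of $n$ and enlarging the constant.

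The remaining and main difficulty is the matching bound $\limsup_n a_n^{-\alpha\bar\rho}\mathbb E_x[R(S_n)\mid\tau>n]\le c_1/\kappa$; for this it suffices that $\{(S_n/a_n)^{\alpha\bar\rho}\}_n$ be uniformly integrable under $\mathbb P_x(\cdot\mid\tau>n)$, i.e.\ $\sup_n\mathbb E_x[(S_n/a_n)^{\alpha\bar\rho}\mathbf 1_{\{S_n\ge Ma_n\}}\mid\tau>n]\to0$ as $M\to\infty$, the small-$S_n$ part being controlled by the domination above. The exact identity already bounds the total mass, $\mathbb E_x[R(S_n)\mathbf 1_{\{\tau>n\}}]=R(x)$, so using $R(y)\gtrsim y^{\alpha\bar\rho}$ for $y$ large one gets $\mathbb E_x[(S_n/a_n)^{\alpha\bar\rho}\mathbf 1_{\{S_n\ge Ma_n\}}\mathbf 1_{\{\tau>n\}}]\lesssim a_n^{-\alpha\bar\rho}R(x)$; what is actually needed is the uniform \emph{smallness} of this far-tail term. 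This is the technical heart: one must control $\mathbb P_x(S_n\ge ta_n,\ \tau>n)$ uniformly in $n$ and $t$, with polynomial decay in $t$ coming from the positivity constraint, reinforced for $t\to\infty$ by a heavy-tail gain of order $t^{-\alpha}$ from the regular variation of the step law (the ``one big jump'' mechanism). I would obtain this either by splitting the identity at time $\lfloor n/2\rfloor$ and applying a Nagaev-type estimate $\mathbb P(S_n-S_{\lfloor n/2\rfloor}\ge z)\le Cn\,\mathbb P(S_1\ge z)$ valid for $z$ of order at least $a_n$, or by importing a precise integral/local limit estimate for walks conditioned to stay positive (Vatutin--Wachtel, Doney); summing over $y\ge Ma_n$ then produces a bound $\varepsilon(M)\,R(x)\,a_n^{-\alpha\bar\rho}$ with $\varepsilon(M)\to0$, which closes the gap and establishes \eqref{eq:minasymptotics}.
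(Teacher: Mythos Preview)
Your approach is correct in outline but genuinely different from the paper's. The paper's proof is a short concatenation of cited results: Bingham's theorem yields $\mathbb P_x(\min_{k\le n}S_k\ge0)\sim R(x)/(n^{\bar\rho}L(n)\Gamma(\rho))$ for some slowly varying $L$; a formula extracted from Caravenna--Chaumont (and Vatutin--Dyakonova) gives $\mathbb P_0(\min_{k\le n}S_k\ge0)\sim\kappa/R(a_n)$; comparing the two at $x=0$ identifies $1/(n^{\bar\rho}L(n)\Gamma(\rho))\sim K/a_n^{\alpha\bar\rho}$, and plugging back finishes. No uniform integrability estimate is needed because that analytic work is entirely delegated to the cited references.

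Your route---the harmonic identity $\mathbb E_x[R(S_n)\mathbf 1_{\tau>n}]=R(x)$ together with the meander invariance principle---is more constructive and makes the origin of $\kappa=1/\mathbb E^{(m)}[\mathcal X_1^{\alpha\bar\rho}]$ transparent. The Fatou step is clean and indeed already delivers the upper bound $\limsup_n a_n^{\alpha\bar\rho}\mathbb P_x(\tau>n)\le KR(x)$. However, the step you yourself flag as ``the technical heart'', the uniform integrability of $(S_n/a_n)^{\alpha\bar\rho}$ under $\mathbb P_x(\cdot\mid\tau>n)$, is left as a sketch. The mechanisms you propose (Nagaev-type one-big-jump bounds, or Doney/Vatutin--Wachtel local estimates for walks conditioned to stay positive) do work, but carrying them out in the stable domain is not a one-liner; in effect you would be reproving the very Caravenna--Chaumont asymptotic the paper cites. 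So your argument trades a citation for a nontrivial tail computation, and as written that computation is not completed.

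A smaller point on \eqref{eq:boundmin}: your Fatou bound is pointwise in $x$, so the threshold beyond which $a_n^{\alpha\bar\rho}\mathbb P_x(\tau>n)\le 2KR(x)$ holds may depend on $x$; handling ``a fixed finite range of $n$'' therefore does not directly produce a constant $c_2$ uniform in $x$. The paper glosses over this too; the uniformity is implicit in Bingham's formula, where the $x$-dependence factors out exactly as $R(x)$ and only the $x$-free sequence $1/(n^{\bar\rho}L(n)\Gamma(\rho))$ remains to be bounded.
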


\begin{proof}[Proof of Theorem~\ref{th:ScalingMin}]
~\\
Replacing $S_n$ by $-S_n$ in Theorem 2 of Bingham \cite{Bingham1973a} (which uses results from Emery \cite{Emery1972}), gives us that there exists some slowly varying function $L(n)$ such that for every continuity point $x$ of $R$, we have
\begin{equation}\label{eq:BinghamMin}
\mathbb{P}_x\left(\min_{k\le n}S_k\ge 0\right)\sim\frac{R(x)}{n^{\bar\rho}L(n)\Gamma(\rho)}\quad(n\to\infty).
\end{equation}
This also holds for every $x\ge0$ by applying the same method as in Kozlov \cite{Kozlov1976}.

Moreover, we have the following asymptotic, which can be deduced from equations (3.5), (3.6) and (3.11) in Caravenna and Chaumont \cite{Caravenna2008}, see also Vatutin and Dyakonova \cite[equations (13),(14)]{Vatutin2017}:
\begin{equation*}
\mathbb{P}_{0}\left(\min_{k\le n}S_k\ge 0\right)\sim\frac{\kappa}{R(a_n)}\quad(n\to\infty),
\end{equation*}
where $\kappa$ has the expression given in equation \eqref{eq:kappameander}.
Putting $x=0$ in equation \eqref{eq:BinghamMin} and comparing with this last equivalent yields
\begin{align*}
\frac{1}{n^{\bar\rho}L(n)\Gamma(\rho)}&=\frac{R(0)}{n^{\bar\rho}L(n)\Gamma(\rho)}\\
&\sim\frac{\kappa}{R(a_n)}\quad(n\to\infty)\\
&\sim\frac{\kappa}{c_{1}a_n^{\alpha\bar\rho}}\quad(n\to\infty)\\
&=\frac{K}{a_n^{\alpha\bar\rho}},
\end{align*}
which, together with \eqref{eq:BinghamMin}, gives \eqref{eq:minasymptotics} and ends the proof.
\end{proof}

\section{Proof of Theorem~\ref{th:convergeZn}}\label{sec:proofZnconverge}

In this section, we prove Theorem~\ref{th:convergeZn}. We first show that $Z_n$ converges to a non-negative limit $Z_\infty$. The key to this will be the following martingale. Define
\begin{equation}\label{eq:quasiDn}
D_n'=\sum_{|u|=n}R(X_u)e^{-X_u}\boldsymbol 1_{\min_{v\le u}X_u\ge0}
\end{equation}
Using the fact that $R$ is harmonic for the spinal random walk killed below zero \eqref{eq:Rharmonic} and the many-to-one lemma (Proposition~\ref{prop:MTOformula}), one easily shows that $(D_n')_n$ is a martingale with respect to the canonical filtration of the branching random walk. 

Following Kyprianou \cite{Kyprianou2004}, we introduce a ``barrier'' by defining for $a\ge0$ the quantities
\begin{align}
Z_n^{(a)}&=\sum_{|u|=n}\left(X_u^+\right)^{\ag\bar\rho}e^{-X_u}\boldsymbol 1_{\forall v\le u, X_v\ge-a}\\
D_n'^{(a)}&=\sum_{|u|=n}R(X_u+a)e^{-X_u}\boldsymbol 1_{\forall v\le u, X_v\ge-a}.
\end{align}
Note that for fixed $a$, $D_n'^{(a)}$ under $\mathbb{P}$ has the same law as $e^{a}D_n'$ under $\mathbb{P}_a$. Since $(D_n')_n$ is a non-negative martingale under $\mathbb{P}_a$, it follows that $D_n'^{(a)}$ converges almost surely to some non-negative random variable.

Moreover, since $\min_{|u|=n}X_u\to\infty$ almost surely as $n\to\infty$ (see introduction), using \eqref{eq:eqR} gives us that for all $a>0$, $c_{1}Z_n^{(a)}$ and $D_n'^{(a)}$ have the same limit almost surely.
Finally we have for all $a>0$
\begin{align*}
\mathbb{P}(Z_n\text{ converges})&\ge \mathbb{P}(Z_n\text{ converges and }\forall k,Z_k=Z_k^{(a)})\\
&=\mathbb{P}(Z_n^{(a)}\text{ converges and }\forall k,Z_k=Z_k^{(a)})\\
&=\mathbb{P}(\forall k, Z_k=Z_k^{(a)}),
\end{align*}
which tends to $1$ as $a\to\infty$, again by the fact that $\min_{|u|=n}X_u\to\infty$ almost surely as $n\to\infty$. So $Z_n$ converges almost surely to a non-negative random variable $Z_\infty$.

We now show that the limit $Z_\infty$ is non-trivial under the additional assumption \eqref{eq:momentsWandZ}. To do so we use the martingale $D_n'$ defined in \eqref{eq:quasiDn}.

By \eqref{eq:boundR}, we have for all $n$, $D_n'\le Z_n$ a.s., so $\mathbb{P}(D'_\infty>0)\le\mathbb{P}(Z_\infty>0)$ and it suffices to prove that $\mathbb{P}(D'_\infty>0)>0$.

First note that $\mathbb{E}[D'_0]=R(0)=1>0$. We will prove that $D'_n$ is uniformly integrable which will give that $\mathbb{E}[D_\infty']=1$, $\mathbb{P}(D'_\infty>0)>0$ and so $Z_\infty$ will not be trivial.
Furthermore, by standard arguments for branching processes, one shows that $\mathbb{P}(Z_\infty=0)$ is a fixed point of the generating function of the number of offspring of a branching random walk particle and therefore is equal to the extinction probability of the branching random walk.

Following Chen \cite{Chen2015}, we will state a specific case of Theorem 2.1(i) by Biggins and Kyprianou \cite{Biggins2004} that provides a sufficient condition for the non-triviality of $D'_\infty$. First we need to make a change of measure to condition the walk $(X_{\xi_n})$ to stay non-negative at all times:
recalling the harmonicity of $R$ for the spinal random walk killed below $0$ under $\mathbb{P}^*$, namely equation~\eqref{eq:Rharmonic}, we can define for all $x\ge 0$ a probability measure $\mathbb{P}^+_x$ by
\begin{equation}
\frac{d\mathbb{P}_x^+}{d\mathbb{P}_x^*}\Big|_{\mathcal \FF_n} = \frac{1}{R(x)} R(X_{\xi_n}) \boldsymbol 1_{\min_{k\le n}X_{\xi_{k}}\ge0}.
\end{equation}
Under $\mathbb{P}_x^+$, the random walk $(X_{\xi_n})_n$ can be seen as being conditioned to stay non-negative at all times, see Bertoin and Doney \cite{Bertoin1994}. We will denote by $\mathbb{E}^+_x$ the associated expectation.

We can now state the theorem:

\begin{theorem}[See Theorem~2.1(i) in Biggins and Kyprianou \cite{Biggins2004}]\label{th:BK04}
Define a random variable $Q$ such that for all $x\ge0$, under $\mathbb{P}_x$,
\begin{equation*}
Q=\frac{\sum_{|u|=1}R(X_u)e^{-X_u}\boldsymbol 1_{X_u\ge0}}{R(x)e^{-x}}.
\end{equation*}
Suppose
\begin{equation}\label{eq:condition}
\sum_{n=1}^{\infty}\mathbb{E}_{X_{\xi_n}}\left[Q\left((R(X_{\xi_n})e^{-X_{\xi_n}}Q)\wedge 1\right)\right]<\infty\quad\mathbb{P}^+-\text{a.s.}
\end{equation}
Then
\begin{equation}
\mathbb{E}[D_\infty']=R(0)=1.
\end{equation}
\end{theorem}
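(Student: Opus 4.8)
The statement is an instance of Theorem~2.1(i) of Biggins and Kyprianou \cite{Biggins2004}, so the plan is to cast the martingale $(D_n')$ from \eqref{eq:quasiDn} in their framework and then read off the conclusion. First I would view $(D_n')$ as the additive martingale of a weighted branching process: to each particle $u$ with $|u|=n$ assign the weight $\eta(u)=R(X_u)e^{-X_u}\boldsymbol 1_{\min_{v\le u}X_v\ge0}$, so that $D_n'=\sum_{|u|=n}\eta(u)$. The multiplicative relation $\eta(u)=\eta(\overline u)\cdot R(X_u)e^{-X_u}/\bigl(R(X_{\overline u})e^{-X_{\overline u}}\bigr)$ along the ancestral line (with $\overline u$ the parent of $u$), together with the harmonicity of $R$ \eqref{eq:Rharmonic} and the many-to-one formula (Proposition~\ref{prop:MTOformula}), identifies $(D_n')$ as the Biggins--Kyprianou martingale, of mean $D_0'=R(0)=1$.

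Next I would carry out the spine change of measure attached to $(D_n')$: tilting $\mathbb{P}$ by $D_n'/D_0'$ on $\FF_n$ produces a distinguished ray $(\xi_n)_n$ along which the position process $(X_{\xi_n})_n$ is the Doob $h$-transform of the spinal random walk $(S_n)$ by the harmonic function $R$ (killed upon entering $(-\infty,0)$), that is, it has the law of $(S_n)$ under $\mathbb{P}^+$, the random walk conditioned to stay non-negative (Bertoin and Doney \cite{Bertoin1994}), while off-spine particles reproduce as under $\mathbb{P}$ and the spine reproduces according to the size-biased point process with Radon--Nikodym derivative proportional to $\sum_{|u|=1}R(X_u)e^{-X_u}\boldsymbol 1_{X_u\ge0}$. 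This is exactly the decomposition encoded by $\mathbb{P}^+$ introduced above. In this description, the relative one-step increment of $D'$ along the spine at generation $n$, computed from a fresh reproduction event started at $X_{\xi_n}$, is precisely the random variable $Q$ of the statement evaluated at $x=X_{\xi_n}$, and $R(X_{\xi_n})e^{-X_{\xi_n}}Q$ is the total $D'$-mass produced by the children of $\xi_n$.

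With these identifications, Biggins and Kyprianou's dichotomy \cite[Theorem~2.1]{Biggins2004} states that $\mathbb{E}[D_\infty']=D_0'$ if and only if $(D_n')$ is uniformly integrable, and their part (i) gives the sufficient condition: writing $g(x)=\mathbb{E}_x\bigl[Q\bigl((R(x)e^{-x}Q)\wedge1\bigr)\bigr]$, uniform integrability holds as soon as $\sum_{n\ge1}g(X_{\xi_n})<\infty$ along $\mathbb{P}^+$-almost every spine trajectory. Since $\sum_n g(X_{\xi_n})=\sum_n\mathbb{E}_{X_{\xi_n}}\bigl[Q\bigl((R(X_{\xi_n})e^{-X_{\xi_n}}Q)\wedge1\bigr)\bigr]$, this is exactly hypothesis \eqref{eq:condition}, whence $\mathbb{E}[D_\infty']=R(0)=1$.

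The only points needing care are bookkeeping ones: checking that $\mathbb{P}^+$ is well-defined and carries the conditioned-walk interpretation (this rests only on \eqref{eq:Rharmonic} and \cite{Bertoin1994}, already in hand), and verifying that the abstract hypotheses of \cite{Biggins2004} accommodate our offspring law, which may be unbounded and, in the heavy-tailed regime, not exponentially integrable---this is harmless because part (i) of their theorem asks for nothing beyond the series \eqref{eq:condition} itself. I do not expect a genuine obstacle at this stage; the substantive work, namely deducing \eqref{eq:condition} from the $L\log L$-type assumption \eqref{eq:momentsWandZ}, is a separate matter, to be handled afterwards.
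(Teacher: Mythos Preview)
Your proposal is correct and matches the paper's treatment: the paper does not give its own proof of this statement but simply records it as the specialization of Theorem~2.1(i) of Biggins and Kyprianou \cite{Biggins2004} to the martingale $(D_n')$, following Chen \cite{Chen2015}. What you have written is precisely the bookkeeping needed to see that the abstract framework applies---identifying $(D_n')$ as the additive martingale of a weighted branching process, the associated size-biased measure with $\mathbb{P}^+$, and the one-step ratio with $Q$---which the paper leaves implicit in the citation.
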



We will now check that condition \eqref{eq:condition} holds in order to conclude that $D_\infty'$ is non-trivial.

By \eqref{eq:eqR} and \eqref{eq:boundR}, there exists $c_{3}>0$, such that
\begin{align}
Q\le\frac{\sum_{|u|=1}R(X_u)e^{-X_u}}{R(x)e^{-x}}&= \frac{\sum_{|u|=1}R(X_u-x+x)e^{-(X_u-x+x)}}{R(x)e^{-x}}\nonumber\\
&\le c_{3}\frac{\sum_{|u|=1}\left(R(x)+(X_u-x)^{\ag\bar\rho}_+\right)e^{-(X_u-x)}}{R(x)}\nonumber\\
&= c_{3}\left(W_1+\frac{Z_1}{R(x)}\right),\label{eq:biasedbound}
\end{align}
where we recall that
\begin{equation*}
Z_1=\sum_{|u|=1}(X_u^+)^{\ag\bar\rho}e^{-X_u}.
\end{equation*}
Using \eqref{eq:biasedbound}, we have for all $n\ge1$,
\begin{align}
&\mathbb{E}_{X_{\xi_n}}\left[Q\left((R(X_{\xi_n})e^{-X_{\xi_n}}Q)\wedge1\right)\right]\nonumber\\
&\le (c_{3})^2\mathbb{E}_{X_{\xi_n}}\left[\left(W_1+\frac{Z_1}{R(X_{\xi_n})}\right)\left((W_1R(X_{\xi_n})e^{-X_{\xi_n}}+Z_1e^{-X_{\xi_n}})\wedge1\right)\right]\nonumber\\
&\le (c_{3})^2 g(X_{\xi_n}),\label{eq:boundedbyg}
\end{align}
where $g(y)=\mathbb{E}\left[\left(W_1+\frac{Z_1}{R(y)}\right)\left((W_1R(y)e^{-y}+Z_1e^{-y})\wedge1\right)\right]$. Using again \eqref{eq:boundR}, there exists $c_{4}>0$, such that $$\forall y\ge0, R(y)e^{-y}\le c_{1}'(1+y)^{\alpha\bar\rho}e^{-y}\le c_{4} e^{-y/2}.$$ We then obtain the following bound on $g$: define $f_1$ and $f_2$ as
\begin{align}
\forall y\ge 0, f_1(y) &= \mathbb{E}\left[W_1\left( e^{-y/4}(W_1+Z_1)\wedge 1\right)\right]\label{eq:deff1}\\
\text{and}\quad f_2(y) &= \mathbb{E}\left[Z_1\left( e^{-y/4}(W_1+Z_1)\wedge 1\right)\right]\label{eq:deff2}
\end{align}
(the exponent $y/4$ is chosen for later convenience). Then, for some $c_{5}>0$,
\begin{equation}\label{eq:gf1f2}
\forall y\ge 0, g(y)\le c_{5}\left(f_1(2y)+\frac{f_2(2y)}{R(y)}\right).
\end{equation}
We now want to use a Tauberian theorem in the spirit of Boutaud and Maillard \cite{Boutaud2019}. To do so we first state the following lemma.

\begin{lemma}\label{lem:HLZ41}
If
\begin{equation*}
\mathbb{E}\left[W_1\left(\log_+W_1\right)^{\ag}+Z_1\left(\log_+Z_1\right)^{\ag\rho}\right]<\infty,
\end{equation*}
then
\begin{equation*}
\mathbb{E}\left[W_1\left(\log_+(W_1+Z_1)\right)^{\ag}+Z_1\left(\log_+(W_1+Z_1)\right)^{\ag\rho}\right]<\infty.
\end{equation*}
\end{lemma}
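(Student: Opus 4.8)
The plan is to split the logarithm $\log_+(W_1+Z_1)$ to reduce the statement to two moment bounds, and then to establish each of these by a short case analysis comparing the sizes of $W_1$ and $Z_1$.

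\emph{Reduction.} From $\log_+(a+b)\le\log 2+\log_+a+\log_+b$ and the elementary inequality $(x+y+z)^p\le C_p(x^p+y^p+z^p)$, valid for $x,y,z\ge 0$ and $p>0$ with $C_p=\max(1,3^{p-1})$ (convexity if $p\ge 1$, subadditivity of $t\mapsto t^p$ if $p<1$), one obtains
\begin{align*}
W_1\bigl(\log_+(W_1+Z_1)\bigr)^{\ag}&\le C_{\ag}\,W_1\bigl((\log 2)^{\ag}+(\log_+W_1)^{\ag}+(\log_+Z_1)^{\ag}\bigr),\\
Z_1\bigl(\log_+(W_1+Z_1)\bigr)^{\ag\rho}&\le C_{\ag\rho}\,Z_1\bigl((\log 2)^{\ag\rho}+(\log_+W_1)^{\ag\rho}+(\log_+Z_1)^{\ag\rho}\bigr).
\end{align*}
Since $\mathbb{E}[W_1]=1$ and, as $\ag\rho>0$, the assumption already forces $\mathbb{E}[Z_1]<\infty$ (on $\{Z_1\ge e\}$ one has $Z_1\le Z_1(\log_+Z_1)^{\ag\rho}$), taking expectations shows it suffices to prove $\mathbb{E}[W_1(\log_+Z_1)^{\ag}]<\infty$ and $\mathbb{E}[Z_1(\log_+W_1)^{\ag\rho}]<\infty$; all remaining terms are controlled by the two hypotheses.

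\emph{The bound $\mathbb{E}[Z_1(\log_+W_1)^{\ag\rho}]<\infty$.} On the event $\{W_1\le Z_1\}$ the integrand is at most $Z_1(\log_+Z_1)^{\ag\rho}$, integrable by hypothesis. On $\{W_1>Z_1\}$ it is at most $W_1(\log_+W_1)^{\ag\rho}\le W_1\bigl(1+(\log_+W_1)^{\ag}\bigr)$, using $\ag\rho\le\ag$ (as $\rho<1$) and $t^{\ag\rho}\le 1+t^{\ag}$ for $t\ge0$; this is integrable by hypothesis and $\mathbb{E}[W_1]<\infty$.

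\emph{The bound $\mathbb{E}[W_1(\log_+Z_1)^{\ag}]<\infty$.} This is the delicate one. On $\{Z_1\le W_1\}$ the integrand is at most $W_1(\log_+W_1)^{\ag}$, integrable. On $\{Z_1>W_1\}$ we split once more. If moreover $W_1\ge\sqrt{Z_1}$, then $Z_1>W_1\ge\sqrt{Z_1}$ forces $Z_1>1$ and $W_1>1$, so $\log_+Z_1=\log Z_1\le 2\log W_1=2\log_+W_1$, whence the integrand is at most $2^{\ag}W_1(\log_+W_1)^{\ag}$, integrable. If instead $W_1<\sqrt{Z_1}$, then $W_1(\log_+Z_1)^{\ag}\le\sqrt{Z_1}\,(\log_+Z_1)^{\ag}\le c\,Z_1$, where $c=\sup_{t\ge 1}t^{-1/2}(\log t)^{\ag}<\infty$ (the bound being trivial when $Z_1<1$); this is integrable since $\mathbb{E}[Z_1]<\infty$. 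Combining the cases proves the bound and hence the lemma.

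\emph{Main difficulty.} The only non-routine point is the last bound on the event $\{Z_1>W_1\}$: one cannot simply use $W_1<Z_1$ there, since that would require the moment $\mathbb{E}[Z_1(\log_+Z_1)^{\ag}]$, which is not assumed finite. The remedy is to compare $W_1$ with the sublinear power $\sqrt{Z_1}$ (any exponent in $(0,1)$ works): either $W_1$ is large enough that $\log Z_1$ is comparable to $\log W_1$, or $W_1$ is small enough that $W_1(\log_+Z_1)^{\ag}$ is dominated by a constant multiple of $Z_1$.
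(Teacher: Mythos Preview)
Your proof is correct and follows essentially the same route as the paper: reduce to the two cross-terms $\mathbb{E}[W_1(\log_+Z_1)^{\ag}]$ and $\mathbb{E}[Z_1(\log_+W_1)^{\ag\rho}]$, then handle each by splitting according to whether $W_1$ is larger or smaller than a power of $Z_1$. The only cosmetic difference is in the subcase $W_1<\sqrt{Z_1}$: the paper bounds $\sqrt{Z_1}(\log_+Z_1)^{\ag}$ by a constant times $Z_1(\log_+Z_1)^{\ag\rho}$ (using $\ag=\ag\bar\rho+\ag\rho$), whereas you bound it more simply by $cZ_1$ and invoke $\mathbb{E}[Z_1]<\infty$---both work equally well.
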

The proof of this lemma is delayed at the end of this section.

Using Lemma~\ref{lem:HLZ41}, we obtain with assumption \eqref{eq:momentsWandZ} that
\begin{equation*}
\mathbb{E}\left[W_1\left(\log_{+}(W_1+Z_1)\right)^{\ag}\right]<\infty\quad\text{and}\quad\mathbb{E}\left[Z_{1}\left(\log_{+}(W_1+Z_1)\right)^{\ag\rho}\right]<\infty.
\end{equation*}
We then apply Lemma C.1 from Boutaud and Maillard \cite{Boutaud2019} twice to the r.v. $W_1+Z_1$, once under the law $\mathbb{E}[W_1\cdot]$ and with the regularly varying function $x\mapsto x^{\ag-1}$, and once under the law $\mathbb{E}[(Z_1/\mathbb{E}[Z_1])\cdot]$ and with the regularly varing function $x\mapsto x^{\ag\rho-1}$ whose index is strictly greater than $-1$. We then obtain
\begin{equation}\label{eq:f1f2integralbound}
\int_{0}^\infty f_{1}(y)y^{\ag-1} dy<\infty\quad\text{and}\quad\int_{0}^{\infty}f_{2}(y)y^{\ag\rho-1}dy<\infty.
\end{equation}
Equation \eqref{eq:f1f2integralbound} and the bound \eqref{eq:boundR} on $R$ give,
\begin{equation}
\int_{0}^{\infty}\left(f_{1}(2y)+\frac{f_2(2y)}{R(y)}\right)y^{\ag-1}dy<\infty.
\end{equation}
Such an integral condition is all we need in order to conclude thanks to the following lemma:

\begin{lemma}\label{lem:GreenEstimate}
Let $f:\mathbb{R}^+\mapsto\mathbb{R}^+$ be a bounded, non-increasing function satisfying $$\int_{0}^{\infty}y^{\ag-1}f(y)dy<\infty.$$
Then,
\begin{equation*}
\mathbb{E}_{x}^+\left[\sum_{n=0}^{\infty}f(X_{\xi_n})\right]\to 0,\quad\text{as }x\to\infty.
\end{equation*}
Furthermore, the above expectation is finite for every $x\ge 0$.
\end{lemma}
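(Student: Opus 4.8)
The plan is to reduce the statement about the branching-random-walk spine under $\mathbb{P}_x^+$ to a purely random-walk Green's function estimate, using the definition of $\mathbb{P}_x^+$ as an $h$-transform. First I would write out the expectation explicitly: by the definition of $\mathbb{P}_x^+$ via the Radon–Nikodym derivative $\frac{1}{R(x)}R(X_{\xi_n})\boldsymbol 1_{\min_{k\le n}X_{\xi_k}\ge 0}$ and the fact that $(X_{\xi_n})_n$ has the law of the random walk $(S_n)_n$ under $\mathbb{P}_x^*$ (started at $x$), one has
\begin{equation*}
\mathbb{E}_x^+\left[\sum_{n=0}^\infty f(X_{\xi_n})\right]=\frac{1}{R(x)}\sum_{n=0}^\infty\mathbb{E}_x\left[R(S_n)f(S_n)\boldsymbol 1_{\min_{k\le n}S_k\ge 0}\right]=\frac{1}{R(x)}\mathbb{E}_x\left[\sum_{n=0}^{\tau^-}R(S_n)f(S_n)\right],
\end{equation*}
where $\tau^-=\inf\{n\ge 0:S_n<0\}$. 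So the claim is that $G f(x):=\mathbb{E}_x\bigl[\sum_{n=0}^{\tau^-}R(S_n)f(S_n)\bigr]$ is finite for every $x\ge 0$ and is $o(R(x))$ as $x\to\infty$.

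The key tool is the identity that $R$ times the Green's function of the walk killed below $0$, evaluated against a test function, can be controlled: for the walk conditioned to stay non-negative (whose transition kernel is the $R$-transform of the killed kernel), $R(x)^{-1}Gf(x)$ is exactly $\mathbb{E}_x^+\bigl[\sum_{n\ge 0}f(X_{\xi_n})\bigr]$, so I want a bound on the Green's function of the $\mathbb{P}^+$-walk. The standard estimate (see Boutaud–Maillard \cite{Boutaud2019}, building on e.g. the local limit theory for walks conditioned to stay positive, and ultimately on \eqref{eq:minasymptotics} / the renewal asymptotics \eqref{eq:eqR}) is that there is a constant $C$ with $\mathbb{P}_x^+(X_{\xi_n}\in dy)$ and the associated Green's density $g^+(x,y)=\sum_n \mathbb{P}_x^+(X_{\xi_n}\in dy)/dy$ satisfying $g^+(x,y)\le C\,\hat R(y)\,/\,(\text{something comparable to } R(x)\vee R(y)\text{-type})$, but the clean way to package it for our purpose is: there is $C>0$ such that for all $x\ge 0$,
\begin{equation*}
\mathbb{E}_x^+\left[\sum_{n=0}^\infty f(X_{\xi_n})\right]\le C\int_0^\infty f(y)\,y^{\ag-1}\,dy\qquad\text{and}\qquad \mathbb{E}_x^+\left[\sum_{n=0}^\infty f(X_{\xi_n})\right]\le C\int_0^\infty f(y+\lfloor x\rfloor)\,(1+y)^{\ag-1}\,dy,
\end{equation*}
the second bound coming from restarting the conditioned walk at its first entry above level $x$ (which it reaches a.s. since the conditioned walk drifts to $+\infty$) combined with a uniform-in-starting-point version of the first. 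Finiteness for every fixed $x$ is then immediate from the first bound and the hypothesis $\int_0^\infty y^{\ag-1}f(y)\,dy<\infty$. For the convergence to $0$: using that $f$ is non-increasing and bounded, split $\int_0^\infty f(y+m)(1+y)^{\ag-1}dy$ at some cutoff and note $f(y+m)\le f(m)$ on $[0,\sqrt m]$ and $f(y+m)\le f(y+m)$ with $y+m\ge y$ on $[\sqrt m,\infty)$; both pieces tend to $0$ as $m=\lfloor x\rfloor\to\infty$, the first because $f(m)\to 0$ (as $f$ is non-increasing and integrable against $y^{\ag-1}$, hence $\to 0$) times a polynomially-growing but controlled factor $\asymp m^{\ag/2}$ — one has to be slightly careful here and instead cut at $\eps m$ and use $f(\eps m)\cdot(\eps m)^{\ag}\to 0$ which holds since $t^{\ag}f(t)\to 0$ follows from monotonicity and $\int t^{\ag-1}f(t)\,dt<\infty$ — and the second because it is a tail of a convergent integral.

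The main obstacle is establishing the Green's function / occupation-measure bound for the walk conditioned to stay non-negative, uniformly in the starting point, since this is where the $\ag$-stable domain-of-attraction hypothesis and the renewal asymptotics \eqref{eq:eqR}, \eqref{eq:boundR} genuinely enter; the rest is elementary analysis. In the write-up I would cite the analogous Lemma from Boutaud–Maillard \cite{Boutaud2019} for the structure of the argument and re-derive the needed uniform bound from the tail asymptotics of the minimum (Theorem~\ref{th:ScalingMin}) together with a classical renewal-type decomposition along the ladder epochs of $(S_n)$: write the killed Green's function $\sum_{n}\mathbb{P}_x(S_n\in dy,\tau^->n)$ in terms of $R$ and $\hat R$ via the Silverman–Tan / Alili–Doney identity $\sum_n \mathbb{P}_x(S_n\in dy,\tau^->n)=\sum_{k}U_{\text{asc}}*U_{\text{desc}}$-type convolution, so that $R(x)^{-1}\mathbb{E}_x[\sum_{n\le\tau^-}R(S_n)f(S_n)]$ is bounded by $c\int f(y)\hat R(dy)$-type expressions, and $\hat R(dy)\lesssim y^{\ag\rho-1}dy$ by the ascending-ladder analogue of \eqref{eq:eqR}; combined with $R(y)\lesssim y^{\ag\bar\rho}$ this yields the weight $y^{\ag\rho-1}\cdot y^{\ag\bar\rho}\asymp y^{\ag-1}$ appearing in the hypothesis, which is the reason the exponent $\ag-1$ is exactly the right one.
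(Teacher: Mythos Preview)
Your reduction to the killed-walk Green's function and your identification of the ladder-height (Silverman--Tan / Alili--Doney) decomposition as the mechanism producing the weight $y^{\ag-1}$ are both correct and match the paper's route (via Theorem~A.3 of \cite{Boutaud2019}). The gap is the second displayed inequality,
\[
\mathbb{E}_x^+\!\left[\sum_{n=0}^\infty f(X_{\xi_n})\right]\le C\int_0^\infty f\bigl(y+\lfloor x\rfloor\bigr)(1+y)^{\ag-1}\,dy,
\]
which is false. For $f=\boldsymbol 1_{[0,1]}$ the right-hand side vanishes once $x>1$, while the left-hand side is the $\mathbb{P}_x^+$-expected number of visits to $[0,1]$, which is strictly positive whenever the killed walk can reach $[0,1]$ from $x$---the generic situation. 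The justification you offer, ``restarting the conditioned walk at its first entry above level $x$'', does not make sense since the walk already starts at $x$; more to the point, nothing prevents the conditioned walk from descending to low levels before drifting off to $+\infty$, and those excursions contribute to the sum.

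What is actually needed, and what the paper does: from the ladder decomposition one gets
\[
\mathbb{E}_x^+\!\left[\sum_{n\ge0}f(X_{\xi_n})\right]\le\frac{C}{R(x)}\int_0^x g(x-y)\,\mu(dy),\qquad g(w)=\int_0^\infty(1+w+z)^{\ag\bar\rho}f(w+z)\,\hat\mu(dz).
\]
Two separate facts are then proved. First, $g$ is bounded and $g(w)\to0$ as $w\to\infty$, via a dyadic decomposition of $\hat\mu$ together with $\hat R(z)\lesssim(1+z)^{\ag\rho}$; this is where the hypothesis $\int y^{\ag-1}f(y)\,dy<\infty$ enters, and your last paragraph has this heuristic exactly right. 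Second, $R(x)^{-1}\int_0^x g(x-y)\,\mu(dy)\to0$: split $[0,x]$ at $x-y_0$ with $y_0$ chosen so that $g\le\dg$ on $[y_0,\infty)$, so the bulk contributes at most $\dg\,\mu([0,x])/R(x)=\dg$ and the boundary piece $\int_{x-y_0}^x g(x-y)\,\mu(dy)$ is $O(1)$ by the key renewal theorem and the boundedness of $g$. This second step is precisely what controls the descents of the walk (encoded by $\mu$) to low levels, and it is the idea missing from your argument.
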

The proof of this lemma is postponed to the end of this section.

Noticing that $y\in\mathbb{R}^+\mapsto f_{1}(2y)+\frac{f_2(2y)}{R(y)}\in\mathbb{R}^+$ is bounded and non-increasing, we obtain by Lemma~\ref{lem:GreenEstimate} and the bound \eqref{eq:f1f2integralbound} that
\begin{equation}
\forall x\ge0,\quad \mathbb{E}^+_x\left[\sum_{n=0}^{\infty}g(X_{\xi_n})\right]<\infty.
\end{equation}

So finally, going back to equation \eqref{eq:boundedbyg}, summing over $n$ and taking expectations, we obtain:
\begin{align*}
\mathbb{E}^+\left[\sum_{n=1}^{\infty}\mathbb{E}_{X_{\xi_n}}\left[Q\left((R(X_{\xi_n})e^{-X_{\xi_n}}Q)\wedge1\right)\right]\right] \le (c_{3})^2\mathbb{E}^+\left[\sum_{n=1}^{\infty}g(X_{\xi_n})\right]<\infty.
\end{align*}
This proves that condition \eqref{eq:condition} holds so, using Theorem~\ref{th:BK04}, $D_\infty'$ is non-trivial.

\begin{proof}[Proof of Lemma~\ref{lem:HLZ41}]	
~\\
Let $x,y\ge0$. We want to bound $x\left(\log_+y\right)^\ag$. We can assume that $y>1$, otherwise the bound is trivial.

If $y<x^2$, we have $x\left(\log_+y\right)^\ag\le2^\ag x\left(\log_+x\right)^{\ag}$. On the other hand, if $y\ge x^2$ (and $y>1$), we have
\begin{align*}
x\left(\log_+y\right)^\ag&\le y^{1/2}\left(\log_+y\right)^\ag\\
&=y^{1/2}\left(\log_+y\right)^{\ag\bar\rho}\left(\log_+y\right)^{\ag\rho}\\
&=y^{1/2}\left(2\ag\bar\rho\log_+\left(y^{\frac{1}{2\ag\bar\rho}}\right)\right)^{\ag\bar\rho}\left(\log_+y\right)^{\ag\rho}\\
&\le \left(2\ag\bar\rho\right)^{\ag\bar\rho}y\left(\log_+y\right)^{\ag\rho},
\end{align*}
using the inequality $\log_+z \le z$.
Using these two bounds, we obtain that
\begin{equation*}
\mathbb{E}\left[W_1\left(\log_+Z_1\right)^{\ag}\right]\le2^\ag\mathbb{E}\left[W_1\left(\log_+W_1\right)^{\ag}\right]+\left(2\ag\bar\rho\right)^{\ag\bar\rho}\mathbb{E}\left[Z_1\left(\log_+Z_1\right)^{\ag\rho}\right]<\infty.
\end{equation*}

Then, using the inequality
\begin{equation}\label{eq:boundylogx}
\forall x,y\ge0,\ y\left(\log_+x\right)^{\ag\rho}\le\max\left\{x\left(\log_+x\right)^{\ag\rho},y\left(\log_+y\right)^{\ag\rho}\right\},\end{equation}
we obtain that
\begin{equation*}
\mathbb{E}\left[Z_1\left(\log_+W_1\right)^{\ag\rho}\right]<\infty.
\end{equation*}

The rest of the proof is in the exact same spirit as the proof of Lemma B.1 (ii) in Aïdékon \cite{Aidekon2013}.
\end{proof}

\begin{proof}[Proof of Lemma~\ref{lem:GreenEstimate}]
~\\
We adapt the proof of Lemma 5.2 in Boutaud and Maillard \cite{Boutaud2019}, using a different decomposition of the integral. From the definition of $\mathbb{P}_x^+$, we have
\begin{equation}\label{eq:Green+to*}
\mathbb{E}_{x}^+\left[\sum_{n=0}^{\infty}f(X_{\xi_n})\right]=\frac{1}{R(x)}\mathbb{E}_{x}^*\left[\sum_{n=0}^{\infty}R(X_{\xi_n})f(X_{\xi_n})\boldsymbol 1_{\forall k\le n, X_{\xi_k}\ge 0}\right].
\end{equation}

Let $\mu$ and $\hat{\mu}$ be the renewal measures associated to (the absolute values of) the strictly descending and strictly ascending ladder heights of $(X_{\xi_n})_{n\ge 0}$, respectively. Recall that $R(x)=\mu([0,x])$ and $\hat{R}(x)=\hat{\mu}([0,x])$. Using Theorem~A.3 from Boutaud and Maillard \cite{Boutaud2019}, we obtain
\begin{align}
\mathbb{E}_{x}^*\left[\sum_{n=0}^{\infty}R(X_{\xi_n})f(X_{\xi_n})\boldsymbol 1_{\forall k\le n, X_{\xi_k}\ge 0}\right]&=c^=\int_{z=0}^{\infty}\int_{y=0}^{x}R(x-y+z)f(x-y+z)\mu(dy)\hat{\mu}(dz)\nonumber\\
&=:I(x),\label{eq:defIx}
\end{align}
where $c^==\exp\left(\sum_{n=1}^{\infty}\frac{1}{n}\mathbb{P}(X_{\xi_n}=0)\right)>0$.

Define $\tilde{f}:y\mapsto(1+y)^{\ag\bar\rho}f(y)$, $y\ge 0$. Using inequality \eqref{eq:boundR}, we know there exists a constant $c_{6}=c^=c_{1}'>0$ such that
\begin{equation}\label{eq:boundI}
I(x)\le c_{6}\int_{z=0}^{\infty}\int_{y=0}^{x}\tilde{f}(x-y+z)\mu(dy)\hat{\mu}(dz).
\end{equation}

We will integrate first over $z$ and then over $y$ in order to bound $I(x)$. For $w\ge 0$, define $g(w)=\int_{0}^{\infty}\tilde{f}(w+z)\hat{\mu}(dz)$. Then \eqref{eq:boundI} implies
\begin{equation}\label{eq:boundIinty}
I(x)\le c_{6}\int_{0}^{x}g(x-y)\mu(dy).
\end{equation}
We decompose $g$ using a geometric pattern. We have for every $w\ge 0$, using that $f$ is non-increasing,
\begin{align*}
g(w)&=\int_{0}^{1}\tilde{f}(w+z)\hat{\mu}(dz)+\sum_{k=0}^{\infty}\int_{(2^k,2^{k+1}]}\tilde{f}(w+z)\hat{\mu}(dz)\\
&\le\int_{0}^{1}\tilde{f}(w+z)\hat{\mu}(dz)+\sum_{k=0}^{\infty}\left(1+2^{k+1}\right)^{\ag\bar\rho}f\left(w+2^k\right)\left(\hat{R}(2^{k+1})-\hat{R}(2^{k})\right),
\end{align*}
since $f$ is non-increasing. Then using that $\hat{R}$ is positive and the inequality $\hat{R}(x)\le \hat{c_{1}}'(1+x)^{\ag\rho}$, which is obtained in the same way as for $R$ by replacing $\bar\rho$ by $\rho$, we have
\begin{align*}
g(w)&\le\int_{0}^{1}\tilde{f}(w+z)\hat{\mu}(dz)+\sum_{k=0}^{\infty}\left(1+2^{k+1}\right)^{\ag\bar\rho}f\left(w+2^k\right)\hat{R}(2^{k+1})\\
&\le \int_{0}^{1}\tilde{f}(w+z)\hat{\mu}(dz)+\hat{c_{1}}'\sum_{k=0}^{\infty}2^{(k+2)\ag\bar\rho}f\left(w+2^k\right)2^{(k+2)\ag\rho}\\
&=\int_{0}^{1}\tilde{f}(w+z)\hat{\mu}(dz)+\hat{c_{1}}' 2^{3\ag}\sum_{k=0}^{\infty}2^{(k-1)\ag}f(w+2^k)\quad(\text{since }\rho+\bar\rho=1)\\
&\le \int_{0}^{1}\tilde{f}(w+z)\hat{\mu}(dz)+\hat{c_{1}}' 2^{3\ag}\sum_{k=0}^{\infty}\frac{1}{2^{k-1}}\int_{2^{k-1}}^{2^k}z^\ag f(w+z)dz,
\end{align*}
since $f$ is non-increasing. Finally we rewrite this bound as follows
\begin{align*}
g(w)&\le \int_{0}^{1}\tilde{f}(w+z)\hat{\mu}(dz)+\hat{c_{1}}' 2^{3\ag}\sum_{k=0}^{\infty}\frac{2^k}{2^{k-1}}\int_{2^{k-1}}^{2^{k}}z^{\ag-1} f(w+z)dz\\
&\le \int_{0}^{1}\tilde{f}(w+z)\hat{\mu}(dz)+\hat{c_{1}}' 2^{3\ag+1}\sum_{k=0}^{\infty}\int_{2^{k-1}}^{2^{k}}z^{\ag-1} f(w+z)dz\\
&= \int_{0}^{1}\tilde{f}(w+z)\hat{\mu}(dz)+\hat{c_{1}}' 2^{3\ag+1}\int_{1/2}^{\infty}z^{\ag-1}f(w+z)dz.
\end{align*}
The integral condition on $f$ and the fact that $f$ is non-increasing ensure that $g$ is bounded and, by dominated convergence, 
\begin{equation}\label{eq:gto0}
g(w)\to 0,\quad w\to\infty.
\end{equation}

By \eqref{eq:Green+to*}, \eqref{eq:defIx} and \eqref{eq:boundIinty}, it remains to show that
\begin{equation}\label{eq:convergenceGreenEstimate}
\frac{1}{R(x)}\int_{0}^{x}g(x-y)\mu(dy)\to 0,\quad x\to\infty.
\end{equation}
Let $\dg>0$. By \eqref{eq:gto0}, there exists $y_0\ge 0$ such that $\forall y\ge y_0, g(y)\le\dg$. Then,
\begin{equation*}
\int_{0}^{x}g(x-y)\mu(dy)\le\dg\mu([0,x])+\int_{x-y_0}^{x}g(x-y)\mu(dy).
\end{equation*}
The first term on the right-hand side is $\dg R(x)$ by definition and the second term is uniformly bounded by a constant, as a consequence of the key renewal theorem (Feller \cite{Feller1971} p363, note that it applies even if $R(x)$ grows sublinearly), and the boundedness of $g$. As a consequence,
\begin{equation*}
\limsup_{x\to\infty}\frac{1}{R(x)}\int_{0}^{x}g(x-y)\mu(dy)\le \dg.
\end{equation*}
Since $\dg$ was arbitrary, this proves \eqref{eq:convergenceGreenEstimate} and thus finishes the proof.
\end{proof}

\section{Proof of Theorem~\ref{th:mainresult}}\label{sec:proofmain}

We define the following quantities that will appear in the proof of Theorem~\ref{th:mainresult}, for $n, k_0\ge 0$ :
\begin{align}
W_{n}'&=\sum_{|u|=n}e^{-X_{u}}\boldsymbol{1}_{\min_{v\le u}X_v\ge0}\label{eq:WnPrime}\\
W_{n,k_0}''&=\sum_{|u|=n}e^{-X_{u}}\boldsymbol{1}_{\min_{v\le u, |v|\ge k_0}X_v\ge0}
\end{align}

We have $\min_{|u|=n}X_u\to\infty$ a.s., thus
\begin{equation}\label{eq:compareWnsecond}
\forall \varepsilon>0,\exists k_0:\mathbb{P}\left(\forall n, W_{n,k_0}''=W_n\right)>1-\varepsilon.
\end{equation}

From Theorem~\ref{th:ScalingMin}, we can deduce the asymptotics for $\mathbb{E}_{x}[W_n']$ :
\begin{proposition}\label{prop:momentWnprime}
Let $x\ge 0$. Then, as $n\to\infty$,
\begin{equation}
\mathbb{E}_x[W_n']\sim\frac{K}{a_n^{\alpha\bar\rho}}R(x)e^{-x},
\end{equation}
and for all $n\ge 0$,
\begin{equation}
\mathbb{E}_{x}[W_n']\le \frac{c_{2}K}{a_n^{\alpha\bar\rho}}R(x)e^{-x},
\end{equation}
where $K$ and $c_{2}$ are the constants from Theorem~\ref{th:ScalingMin} and $R$ is the renewal function associated with the strictly descending ladder heights of the random walk $(X_{\xi_n})_n$.
\end{proposition}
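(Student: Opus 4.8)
The plan is to reduce the statement to Theorem~\ref{th:ScalingMin} by means of the many-to-one formula. First I would apply Proposition~\ref{prop:MTOformula} with the family $H_n(u)=\boldsymbol 1_{\min_{v\le u}X_v\ge0}$, which is non-negative and $\FF_n$-measurable. Since we assume $x\ge0$, the constraint on the root position $X_{\xi_0}=x$ is automatically satisfied, so $H_n(\xi_n)=\boldsymbol 1_{\min_{k\le n}X_{\xi_k}\ge0}$ and the many-to-one formula yields
\begin{equation*}
\mathbb{E}_x[W_n']=e^{-x}\,\mathbb{P}^*_x\!\left(\min_{k\le n}X_{\xi_k}\ge0\right).
\end{equation*}

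Next I would use the fact recalled at the end of Section~\ref{sec:spine} that under $\mathbb{P}^*_x$ the spinal process $(X_{\xi_k})_k$ has the law of the random walk $(S_k)_k$ started from $S_0=x$. This gives
\begin{equation*}
\mathbb{E}_x[W_n']=e^{-x}\,\mathbb{P}_x\!\left(\min_{k\le n}S_k\ge0\right),
\end{equation*}
and it then suffices to multiply the two conclusions of Theorem~\ref{th:ScalingMin}, namely \eqref{eq:minasymptotics} and \eqref{eq:boundmin}, by $e^{-x}$ to obtain respectively the claimed equivalence and the claimed uniform upper bound. The renewal function $R$ appearing in the statement is, by construction, the one associated with the strictly descending ladder heights of $(S_k)_k$, equivalently of $(X_{\xi_k})_k$.

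In this sense the proposition is an immediate corollary with no genuine obstacle of its own: all the analytic content --- identifying the slowly varying function and the constant $\kappa$ through the meander of the limiting stable process --- has already been carried out in the proof of Theorem~\ref{th:ScalingMin}, relying on Bingham \cite{Bingham1973a}, Emery \cite{Emery1972} and Caravenna and Chaumont \cite{Caravenna2008}. The only points deserving a line of care are verifying the non-negativity and $\FF_n$-measurability required to apply the many-to-one formula, and the identification of the law of the spine under the size-biased change of measure; both are already in place from Sections~\ref{sec:spine} and~\ref{sec:renewalandminimum}.
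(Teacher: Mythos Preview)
Your proof is correct and follows essentially the same approach as the paper: apply the many-to-one formula to rewrite $\mathbb{E}_x[W_n']$ as $e^{-x}\,\mathbb{P}^*_x(\min_{k\le n}X_{\xi_k}\ge0)$, identify the spine with the random walk $(S_n)_n$, and invoke Theorem~\ref{th:ScalingMin}. The paper's proof is just a terser version of the same argument.
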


\begin{proof}[Proof of Proposition~\ref{prop:momentWnprime}]
\begin{align*}
\mathbb{E}_{x}[W_n']&=\mathbb{E}_{x}\left[\sum_{|u|=n}e^{-X_u}\boldsymbol 1_{\min_{v\le u}X_v\ge0}\right]\\
&=e^{-x}\mathbb{P}_x^{*}\left(\min_{k\le n}X_{\xi_k}\ge 0\right),\quad\text{by the many-to-one formula.}
\end{align*}
Applying Theorem~\ref{th:ScalingMin} to the random walk $(X_{\xi_n})_n$ ends the proof.
\end{proof}

\begin{proof}[Proof of Theorem~\ref{th:mainresult}]
~\\
Remember that we assume $\lambda=1$ which can be obtained by replacing $a_n$ by $a_n\lambda^{1/\alpha}$.

We will now apply the method from Boutaud and Maillard \cite{Boutaud2019} to our setting and make slight changes when needed.

We will start by proving that for any $s>0$, $\mathbb{E}[\exp(-s a_n^{\alpha\bar\rho}W_{n,k_0}'')|\FF_{k_0}]$ converges in probability to $\exp(-s \kappa Z_\infty)$ as first $n$, then $k_0$, tend to infinity. We do so by proving a lower and upper bound. We then use Cantor diagonal extraction and apply a lemma from \cite{Boutaud2019} translating the convergence of conditional Laplace transform in term of convergence in probability. We then use Equation~\eqref{eq:compareWnsecond} to conclude.

We start by the lower bound on the conditional Laplace transform. For any $s>0$, we have
\begin{align}
\mathbb{E}\left[\exp\left(-s a_n^{\alpha\bar\rho} W_{n,k_0}''\right)\Big{|}\FF_{k_0}\right]&=\prod_{|u|=k_0} \mathbb{E}_{X_{u}}\left[\exp\left(-s a_n^{\alpha\bar\rho} W_{n-k_0}'\right)\right]\nonumber\\
&\ge \exp\left(-s a_n^{\alpha\bar\rho} \sum_{|u|=k_0}\mathbb{E}_{X_u}\left[W_{n-k_0}'\right]\right),\label{eq:jensen}
\end{align}
by Jensen's inequality.

By Proposition~\ref{prop:momentWnprime}, for every $x\in\mathbb{R}$, $a_n^{\alpha\bar\rho} \mathbb{E}_{x}[W_{n-k_0}']$ converges to $KR(x)e^{-x}$ as $n\to\infty$ and is bounded from above by $c_{2}KR(x)e^{-x}$. Furthermore, using the many-to-one formula, one easily checks that $\sum_{|u|=k_0}R(X_u)e^{-X_u}$ is finite in expectation and therefore almost surely. By dominated convergence, we get almost surely
\begin{equation}\label{eq:converge}
a_n^{\alpha\bar\rho}\sum_{|u|=k_0}\mathbb{E}_{X_u}\left[W_{n-k_0}'\right]\underset{n\to\infty}{\longrightarrow}K\sum_{|u|=k_0}R(X_u)e^{-X_u}.
\end{equation}
By Equations \eqref{eq:jensen} and \eqref{eq:converge}, we get almost surely,
\begin{equation}\label{eq:lower}
\underset{n\to\infty}{\liminf}\ \mathbb{E}\left[\exp\left(-s a_n^{\alpha\bar\rho} W_{n,k_0}''\right)\Big{|}\FF_{k_0}\right]\ge \exp\left(-s K\sum_{|u|=k_0}R(X_u)e^{-X_u}\right).
\end{equation}
Since $\min_{|u|=k_0}X_u\to\infty$ almost surely as $k_0\to\infty$, using \eqref{eq:eqR} and Theorem~\ref{th:convergeZn}, we get, almost surely,
\begin{equation}
\label{eq:lower2}
\lim_{k_0\to\infty}\sum_{|u|=k_0}KR(X_u)e^{-X_u}=\lim_{k_0\to\infty}\sum_{|u|=k_0}Kc_{1}\left(X_u^+\right)^{\ag\bar\rho}e^{-X_u}=\kappa Z_\infty,
\end{equation}
so that, by \eqref{eq:lower} and \eqref{eq:lower2}, almost surely,
\begin{equation}
\label{eq:lower3}
\liminf_{k_0\to\infty} \underset{n\to\infty}{\liminf}\ \mathbb{E}\left[\exp\left(-s a_n^{\alpha\bar\rho} W_{n,k_0}''\right)\Big{|}\FF_{k_0}\right] \ge e^{-s\kappa Z_\infty}.
\end{equation}

We now deal with the upper bound. For $s>0$ fixed,  and any $s'\in(0,s)$ there exists $\varepsilon>0$ such that 
\begin{equation}\label{eq:deltabound}
\forall x\in[0,\varepsilon),e^{-s x}\le 1-s' x.
\end{equation}

Fix $s>0$ and $s'\in(0,s)$ (that will tend to $s$), and take $\varepsilon$ satisfying \eqref{eq:deltabound}. We compute
\begin{align*}
\mathbb{E}\left[\exp\left(-s a_n^{\alpha\bar\rho}W_{n,k_0}''\right)\Big{|}\FF_{k_0}\right]&=\prod_{|u|=k_0}\mathbb{E}_{X_u}\left[\exp\left(-s a_n^{\alpha\bar\rho}W_{n-k_0}'\right)\right]\\
&\le\prod_{|u|=k_0}\mathbb{E}_{X_u}\left[\exp\left(-s a_n^{\alpha\bar\rho}W_{n-k_0}'\boldsymbol 1_{a_n^{\alpha\bar\rho}W_{n-k_0}'<\varepsilon}\right)\right],
\end{align*}
since $W_{n-k_0}'$ is non-negative. Using inequality~\eqref{eq:deltabound}, the linearity of expectation and finally the inequality $1-x\le e^{-x}$, we compute:
\begin{align*}
\mathbb{E}\left[\exp\left(-s a_n^{\alpha\bar\rho}W_{n,k_0}\right)\Big{|}\FF_{k_0}\right]&\le\prod_{|u|=k_0}\mathbb{E}_{X_u}\left[1-s' a_n^{\alpha\bar\rho}W_{n-k_0}'\boldsymbol 1_{a_n^{\alpha\bar\rho}W_{n-k_0}'<\varepsilon}\right]\\
&\le\prod_{|u|=k_0}\left(1-s'\mathbb{E}_{X_u}\left[a_n^{\alpha\bar\rho}W_{n-k_0}'\boldsymbol 1_{a_n^{\alpha\bar\rho}W_{n-k_0}'<\varepsilon}\right]\right)\\
&\le\exp\left(-\sum_{|u|=k_0}s'\mathbb{E}_{X_u}\left[a_n^{\alpha\bar\rho}W_{n-k_0}'\boldsymbol 1_{a_n^{\alpha\bar\rho}W_{n-k_0}'<\varepsilon}\right]\right).
\end{align*}

Using Fatou's lemma, we obtain
\begin{align}
&\limsup_{n\to\infty}\mathbb{E}\left[\exp(-s a_n^{\alpha\bar\rho}W_{n,k_0}'')\Big{|}\FF_{k_0}\right]\nonumber\\
&\le\exp\left(-\sum_{|u|=k_0}s'\liminf_{n\to\infty}\mathbb{E}_{X_u}\left[a_n^{\alpha\bar\rho}W_{n-k_0}'\boldsymbol 1_{a_n^{\alpha\bar\rho}W_{n-k_0}'<\varepsilon}\right]\right)\nonumber\\
&=\exp\left(s'\sum_{|u|=k_0}\left(-\liminf_{n\to\infty}\mathbb{E}_{X_{u}}\left[a_n^{\alpha\bar\rho} W_{n-k_0}'\right]
+\limsup_{n\to\infty}\mathbb{E}_{X_{u}}\left[a_n^{\alpha\bar\rho} W_{n-k_0}'\boldsymbol{1}_{a_n^{\alpha\bar\rho} W_{n-k_0}'\ge \varepsilon}\right]\right)\right).\label{eq:Fatoubound}
\end{align}
As seen above in Proposition~\ref{prop:momentWnprime}, the first term inside the summation on the right-hand side converges towards $-KR(X_u)e^{-X_u}$ as $n\to\infty$, almost surely.

We now need to control $\limsup_{n\to\infty}\mathbb{E}_{X_{u}}\left[a_n^{\alpha\bar\rho} W_{n-k_0}'\boldsymbol{1}_{a_n^{\alpha\bar\rho} W_{n-k_0}'\ge \varepsilon}\right]$.

\bigskip

Let $\mathcal{G}=\sigma\left(\xi_{k},X_{\xi_{k}i},k\in\mathbb{N},i\in\mathbb{N}^*\bigcup\{\varnothing\}\right)$ be the $\sigma$-algebra containing information about the spine and its children. Applying first the many-to-one formula, then Markov's inequality and finally Theorem~\ref{th:ScalingMin} we have:
\begin{align*}
\mathbb{E}_{x}\left[a_n^{\alpha\bar\rho} W_n'\boldsymbol 1_{a_n^{\alpha\bar\rho}W_n'\ge\varepsilon}\right]&=a_n^{\alpha\bar\rho}e^{-x}\mathbb{E}_x^*\left[\boldsymbol 1_{\min_{k\le n}X_{\xi_k}\ge0}\boldsymbol 1_{a_n^{\alpha\bar\rho}W_n'\ge\varepsilon}\right]\\
&=a_n^{\alpha\bar\rho}e^{-x}\mathbb{E}_x^*\left[\boldsymbol 1_{\min_{k\le n}X_{\xi_k}\ge 0}\mathbb{E}_x^*\left[\boldsymbol 1_{a_n^{\alpha\bar\rho}W_n'\ge\varepsilon}|\mathcal{G}\right]\right]\\
&\le a_n^{\alpha\bar\rho}e^{-x}\mathbb{P}_x^*\left(\min_{k\le n}X_{\xi_k}\ge0\right)\mathbb{E}_x^*\left[\mathbb{E}^*_x\left[\frac{a_n^{\alpha\bar\rho} W_n'}{\varepsilon}\wedge 1|\mathcal{G}\right]|\min_{k\le n}X_{\xi_k}\ge 0\right]\\
&\le c_2KR(x)e^{-x}\mathbb{E}_{x}^{*}\left[\mathbb{E}_{x}^*\left[\frac{a_n^{\alpha\bar\rho} W_n'}{\varepsilon}\wedge 1|\mathcal{G}\right]|\min_{k\le n}X_{\xi_k}\ge 0\right].
\end{align*}
Decomposing the particles according to their ancestor on the spine (see Lemma 4.1 in \cite{Boutaud2019}), we obtain
\begin{equation}
\mathbb{E}_{x}\left[a_n^{\alpha\bar\rho} W_n'\boldsymbol 1_{a_n^{\alpha\bar\rho}W_n'\ge\varepsilon}\right]\le c_2 KR(x)e^{-x}\left(T_{1}(x,\varepsilon,n)+T_2(x,\varepsilon,n)\right),
\end{equation}
where
\begin{align}
T_1(x,\varepsilon,n)&=\mathbb{E}_{x}^{*}\left[\frac{a_n^{\alpha\bar\rho} e^{-X_{\xi_n}}}{\varepsilon}\wedge 1\Big{|}\min_{k\le n}X_{\xi_k}\ge 0\right]\label{eq:defT1}\\
\text{and}\quad T_2(x,\varepsilon,n)&=\mathbb{E}_{x}^{*}\left[\left(\frac{a_n^{\alpha\bar\rho}}{\varepsilon}\sum_{k=0}^{n-1}\sum_{\substack{i\in\mathbb{N}\\\xi_ki\ne\xi_{k+1}}}\mathbb{E}_{X_{\xi_ki}}\left[W_{n-k-1}'\right]\right)\wedge 1\Big{|}\min_{k\le n}X_{\xi_k}\ge 0\right]\label{eq:defT2}.
\end{align}
This two terms are controlled by the following lemmas, whose proofs are postponed to Section~\ref{sec:proofT1T2}.
\begin{lemma}\label{lem:T1}
For any fixed $\eps>0$ and $x\ge 0$,
\begin{equation*}
T_1(x,\eps,n)\underset{n\to\infty}{\longrightarrow}0.
\end{equation*}
\end{lemma}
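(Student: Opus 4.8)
The plan is to analyze the conditional expectation defining $T_1(x,\eps,n)$ by invoking the known scaling limit of the random walk $(X_{\xi_k})_{k\le n}$, conditioned to stay non-negative, towards the $\ag$-stable meander. First I would observe that the random variable inside the expectation, namely $\frac{a_n^{\alpha\bar\rho}e^{-X_{\xi_n}}}{\eps}\wedge 1$, is bounded by $1$ and is a function of the single endpoint $X_{\xi_n}$. Under $\mathbb{P}_x^*$ conditioned on $\{\min_{k\le n}X_{\xi_k}\ge 0\}$, the rescaled path $(X_{\xi_{\lfloor nt\rfloor}}/a_n)_{t\in[0,1]}$ converges in distribution to the normalized $\ag$-stable meander $(\mathcal{X}_t)_{t\in[0,1]}$ under $\mathbb{P}^{(m)}$ (this is precisely the Caravenna--Chaumont / Vatutin--Dyakonova input already used in the proof of Theorem~\ref{th:ScalingMin}), and in particular $X_{\xi_n}/a_n$ converges in law to $\mathcal{X}_1$, which is almost surely strictly positive under $\mathbb{P}^{(m)}$.

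Given this, the key step is a simple domination/convergence argument. For any fixed $\delta>0$, split according to whether $X_{\xi_n}/a_n < \delta$ or $\ge\delta$. On the event $\{X_{\xi_n}\ge \delta a_n\}$ we have $a_n^{\alpha\bar\rho}e^{-X_{\xi_n}}\le a_n^{\alpha\bar\rho}e^{-\delta a_n}\to 0$ since $a_n\to\infty$ (as $(a_n)$ is a regularly varying sequence of index $1/\ag>0$ coming from the stable normalization), so this contribution vanishes. On the complementary event, the integrand is bounded by $1$, so this contribution is at most $\mathbb{P}_x^*(X_{\xi_n}<\delta a_n\mid \min_{k\le n}X_{\xi_k}\ge 0)$, which converges to $\mathbb{P}^{(m)}(\mathcal{X}_1\le\delta)$ by the meander convergence (using that $\delta$ can be chosen a continuity point of the law of $\mathcal{X}_1$). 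Since $\mathcal{X}_1>0$ a.s.\ under $\mathbb{P}^{(m)}$, letting $\delta\to 0$ makes this last quantity tend to $0$. Combining, $\limsup_{n\to\infty}T_1(x,\eps,n)\le \mathbb{P}^{(m)}(\mathcal{X}_1\le\delta)$ for every small $\delta$, hence $T_1(x,\eps,n)\to 0$.

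The main obstacle I anticipate is making the conditioned functional convergence precise and uniform enough: one needs the convergence of $(X_{\xi_{\lfloor nt\rfloor}}/a_n)_{t}$ under the conditioned law $\mathbb{P}_x^*(\,\cdot\mid \min_{k\le n}X_{\xi_k}\ge 0)$, starting from a fixed $x\ge 0$ (not just $x=0$), to the meander law; this is where one appeals to the invariance-principle-type results of Caravenna--Chaumont and the extension to arbitrary fixed starting points by the Kozlov-type argument already cited for Theorem~\ref{th:ScalingMin}. Once the one-dimensional marginal convergence $X_{\xi_n}/a_n \Rightarrow \mathcal{X}_1$ (under the conditioning, from start $x$) is in hand, the rest is the elementary truncation argument above. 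Alternatively, if one prefers to avoid invoking the full meander convergence, one can bound $T_1$ directly: write $\mathbb{E}_x^*[(a_n^{\alpha\bar\rho}e^{-X_{\xi_n}}/\eps)\wedge 1 ; \min_{k\le n}X_{\xi_k}\ge 0]$ and use Theorem~\ref{th:ScalingMin} applied at times $n$ and, say, $n-\lfloor n/2\rfloor$ together with the Markov property at time $\lfloor n/2\rfloor$ to reduce to controlling $\mathbb{P}(X_{\xi_{n/2}}\text{ small})$ conditioned to stay positive — but the meander route is cleaner and matches the tools already set up in the paper.
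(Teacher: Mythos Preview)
Your proposal is correct and follows essentially the same approach as the paper: both invoke the Caravenna--Chaumont convergence of the conditioned rescaled walk to the $\alpha$-stable meander, deduce that $a_n^{\alpha\bar\rho}e^{-X_{\xi_n}}\to 0$ in probability under the conditioning, and conclude by bounded convergence since the integrand is capped at $1$. Your $\delta$-truncation argument simply makes explicit the ``convergence to $0$'' step that the paper states in one line.
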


\begin{lemma}\label{lem:T2}
For every $\eps>0$, there exists a positive function $\tilde{h}$, such that $\tilde{h}(x)\to 0$ as $x\to\infty$ and such that the following holds: for every $x\ge 0$, we have
\begin{equation*}
\limsup_{n\to\infty}T_2(x,\eps,n)\le\tilde{h}(x).
\end{equation*}
\end{lemma}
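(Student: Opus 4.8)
The plan is to adapt the treatment of the analogous ``second term'' in \cite{Boutaud2019}. First, by Proposition~\ref{prop:momentWnprime} one has $\mathbb{E}_{X_{\xi_ki}}[W_{n-k-1}']\le c_2 K a_{n-k-1}^{-\alpha\bar\rho}R(X_{\xi_ki})e^{-X_{\xi_ki}}$ (a vacuous bound when $X_{\xi_ki}<0$, since then $W'_{n-k-1}\equiv 0$), so that
\[
T_2(x,\eps,n)\le\mathbb{E}_x^*\!\left[\left(\frac{c_2 K}{\eps}\sum_{k=0}^{n-1}\Big(\frac{a_n}{a_{n-k-1}}\Big)^{\alpha\bar\rho}\Omega_k\right)\wedge 1\ \Big|\ \min_{j\le n}X_{\xi_j}\ge 0\right],
\]
where $\Omega_k=\sum_{i:\xi_ki\ne\xi_{k+1}}R(X_{\xi_ki})e^{-X_{\xi_ki}}\boldsymbol 1_{X_{\xi_ki}\ge0}$. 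Bounding $\Omega_k$ by the same sum over \emph{all} children of $\xi_k$ and arguing as in \eqref{eq:biasedbound} (via \eqref{eq:boundR} and $R(y)e^{-y}\le c_4 e^{-y/2}$), one gets $\Omega_k\le c_3\big(c_4 e^{-X_{\xi_k}/2}\mathcal W_k+e^{-X_{\xi_k}}\mathcal Z_k\big)$, where, conditionally on the spine trajectory, $(\mathcal W_k,\mathcal Z_k)$ is, at generation $k$, controlled through the size-biased law $\mathbb{E}[W_1\,\cdot\,]$ of $(W_1,Z_1)$. Now fix $\theta\in(0,1)$, set $m=\lfloor(1-\theta)n\rfloor$, and use $(a+b)\wedge1\le(a\wedge1)+(b\wedge1)$ to split the sum over $k$ at $m$; this bounds $T_2\le A_n(\theta)+B_n(\theta)$.

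I would handle the ``early'' part $A_n(\theta)$ first. By regular variation of $(a_n)$, the prefactors $(a_n/a_{n-k-1})^{\alpha\bar\rho}$ are bounded by some $C_\theta$ for all $k<m$ and $n$ large. Using the strong Markov property of the spine at generation $m$, the bound $\mathbb{P}_z^*(\min_{j\le\ell}X_{\xi_j}\ge0)\le c_2 K a_\ell^{-\alpha\bar\rho}R(z)$ from Theorem~\ref{th:ScalingMin}, the defining identity $\mathbb{E}_x^*[F\,\boldsymbol 1_{\min_{j\le m}X_{\xi_j}\ge0}\,R(X_{\xi_m})]=R(x)\mathbb{E}_x^+[F]$ of $\mathbb{P}_x^+$ at time $m$, and the equivalent $\mathbb{P}_x^*(\min_{j\le n}X_{\xi_j}\ge0)\sim K a_n^{-\alpha\bar\rho}R(x)$, one obtains
\[
\limsup_{n\to\infty}A_n(\theta)\le c_2\,\theta^{-\bar\rho}\,\mathbb{E}_x^+\!\left[\Big(\tfrac{c_2 K C_\theta}{\eps}\textstyle\sum_{k\ge0}\Omega_k\Big)\wedge1\right].
\]
Applying $(\sum a_k)\wedge1\le\sum(a_k\wedge1)$ once more and conditioning on the spine, the right-hand side is at most $c_2\theta^{-\bar\rho}\,\mathbb{E}_x^+\big[\sum_{k\ge0}\tilde\phi(X_{\xi_k})\big]$ with $\tilde\phi(y)=\mathbb{E}\big[W_1\big((C e^{-y/2}(W_1+Z_1))\wedge1\big)\big]$ bounded and non-increasing and, by the same Tauberian step (Lemma~C.1 of \cite{Boutaud2019} applied to $W_1+Z_1$ under $\mathbb{E}[W_1\,\cdot\,]$ with index $\alpha-1$) already used to obtain \eqref{eq:f1f2integralbound}, satisfying $\int_0^\infty\tilde\phi(y)y^{\alpha-1}\,dy<\infty$. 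Lemma~\ref{lem:GreenEstimate} then gives that this quantity is finite for each $x$ and tends to $0$ as $x\to\infty$; call the resulting bound $\tilde h_\theta(x)$.

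The ``late'' part $B_n(\theta)$ is the crux, since there the prefactors $(a_n/a_{n-k-1})^{\alpha\bar\rho}$ are unbounded (of order $a_n^{\alpha\bar\rho}$ for $k$ near $n$). The idea is that under the conditioning the spine is macroscopically high on $[m,n]$: for each $\delta>0$ the event $E_{n,\delta}=\{X_{\xi_k}\ge\delta a_n\text{ for all }k\in[m,n]\}$ satisfies $\mathbb{P}_x^*(E_{n,\delta}^{c}\mid\min_{j\le n}X_{\xi_j}\ge0)\to p(\theta,\delta)$ as $n\to\infty$, with $p(\theta,\delta)\to0$ as $\delta\to0$, by the invariance principle for the walk conditioned to stay non-negative (cf.\ \cite{Caravenna2008,Vatutin2017}) and the fact that the $\alpha$-stable meander is a.s.\ bounded below away from $0$ on $[1-\theta,1]$. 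On $E_{n,\delta}$ one has $\Omega_k\le c_3 c_4 e^{-\delta a_n/2}(\mathcal W_k+\mathcal Z_k)$ for $k\ge m$, while $\sum_{k\ge m}(a_n/a_{n-k-1})^{\alpha\bar\rho}\le C_\theta n$. A truncation of the off-spine reproduction sizes $\mathcal W_k+\mathcal Z_k$ at a level $N_n$ (e.g.\ $N_n=e^{\delta a_n/4}$) — controlling the event $\{\exists k\ge m:\mathcal W_k+\mathcal Z_k>N_n\}$ by a union bound via $\mathbb{P}^*(\mathcal W+\mathcal Z>N)=\mathbb{E}[W_1\boldsymbol 1_{W_1+Z_1>N}]$, and the truncated sum by Markov's inequality via $\mathbb{E}[W_1(W_1+Z_1)\boldsymbol 1_{W_1+Z_1\le N}]=O\big(N/(\log N)^{\alpha\rho}\big)$, the latter a consequence of \eqref{eq:momentsWandZ} and Lemma~\ref{lem:HLZ41} — then shows $\big(\tfrac{c_2 K}{\eps}\sum_{k\ge m}(a_n/a_{n-k-1})^{\alpha\bar\rho}\Omega_k\big)\boldsymbol 1_{E_{n,\delta}}\to0$ in $\mathbb{P}_x^*(\,\cdot\mid\min_{j\le n}X_{\xi_j}\ge0)$-probability. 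Hence $\limsup_{n\to\infty}B_n(\theta)\le p(\theta,\delta)$ for every $\delta>0$, so $\limsup_{n\to\infty}B_n(\theta)=0$.

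Combining the two parts, $\limsup_{n\to\infty}T_2(x,\eps,n)\le\tilde h_\theta(x)$ for every $\theta\in(0,1)$, and the lemma follows by taking, say, $\tilde h=\tilde h_{1/2}$, which tends to $0$ at infinity. The main obstacle is the late part $B_n(\theta)$: it requires simultaneously the positivity of the meander (to push the conditioned spine far from $0$ at late times), the $\wedge 1$ truncation, and the $L\log L$-type control \eqref{eq:momentsWandZ} of the off-spine offspring, in order to defeat the regularly varying norming $a_n^{\alpha\bar\rho}$ against the (at most linearly many) off-spine subtrees.
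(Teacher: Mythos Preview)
Your strategy is essentially the paper's: bound via Proposition~\ref{prop:momentWnprime}, split the sum over $k$ into an early and a late piece, pass the early piece to $\mathbb{P}_x^+$ and control it by Lemma~\ref{lem:GreenEstimate}, and kill the late piece using that the conditioned spine is macroscopically positive. However, there is a genuine gap in how you compute conditional expectations under $\mathbb{P}_x^+$.

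After obtaining
\[
\limsup_{n\to\infty}A_n(\theta)\le c_2\theta^{-\bar\rho}\,\mathbb{E}_x^+\!\Big[\Big(\tfrac{c_2 K C_\theta}{\eps}\textstyle\sum_{k\ge0}\Omega_k\Big)\wedge1\Big],
\]
you claim that conditioning on the spine yields $\mathbb{E}_x^+[(C\Omega_k)\wedge1\mid X_{\xi_k}=y]\le\tilde\phi(y)=\mathbb{E}[W_1((Ce^{-y/2}(W_1+Z_1))\wedge1)]$. This is what one gets under $\mathbb{P}_x^*$, \emph{not} under $\mathbb{P}_x^+$. The change of measure $d\mathbb{P}_x^+/d\mathbb{P}_x^*|_{\FF_{k+1}}=R(X_{\xi_{k+1}})\boldsymbol 1_{X_{\xi_{k+1}}\ge0}/R(x)$ further biases the offspring point process of $\xi_k$: given $X_{\xi_k}=y$, the law of the children of $\xi_k$ is tilted by $R(X_{\xi_{k+1}})\boldsymbol 1_{X_{\xi_{k+1}}\ge0}/R(y)$. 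Computing as in \eqref{eq:biasedbound}, the correct upper bound is (up to constants)
\[
f(y)=\mathbb{E}\Big[\Big(W_1+\tfrac{Z_1}{R(y)}\Big)\big((Ce^{-y/2}(W_1+Z_1))\wedge1\big)\Big],
\]
exactly the function the paper uses (see \eqref{eq:deff}). Your $\tilde\phi$ is missing the $Z_1/R(y)$ term; this is precisely where the second half of \eqref{eq:momentsWandZ} is needed, via $\int_0^\infty (f_2(y)/R(y))y^{\alpha-1}dy\asymp\int_0^\infty f_2(y)y^{\alpha\rho-1}dy<\infty$ and Lemma~\ref{lem:HLZ41}. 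The fix is short, but as written the step is not justified.

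For the late part, your truncation at $N_n=e^{\delta a_n/4}$ is more fragile than necessary. The union bound gives $n\,\mathbb{E}[W_1\boldsymbol 1_{W_1+Z_1>N_n}]\le n(\log N_n)^{-\alpha}\,o(1)$, and $(\log N_n)^\alpha\asymp a_n^\alpha\asymp n\,\ell(n)^\alpha$ with $\ell$ slowly varying; when $\ell(n)\to0$ the bound need not vanish. The paper avoids this entirely: it bounds $(a_n/a_{n-k-1})^{\alpha\bar\rho}e^{-X_{\xi_k}/2}\le c\,e^{-X_{\xi_k}/4}$ on the good event and then uses that $\sum_{k\ge0}(e^{-X_{\xi_k}/4}V_k/\eps\wedge1)$ has finite $\mathbb{E}_x^+$-expectation (by the same Lemma~\ref{lem:GreenEstimate} argument), so its tail over $k\ge n/2$ vanishes. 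This is both simpler and robust to the slowly varying correction.
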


Applying Lemma~\ref{lem:T1} and Lemma~\ref{lem:T2}, we obtain that
\begin{equation}\label{eq:boundedT1T2}
\limsup_{n\to\infty}\mathbb{E}_{x}\left[a_n^{\alpha\bar\rho} W_n'\boldsymbol 1_{a_n^{\alpha\bar\rho}W_n'\ge\varepsilon}\right]\le c_2 KR(x)e^{-x}\tilde{h}(x).
\end{equation}

As seen before, by Proposition~\ref{prop:momentWnprime}, the first term inside the summation of \eqref{eq:Fatoubound} converges towards $-KR(X_u)e^{-X_u}$ as $n\to\infty$, almost surely. Altogether we obtain from this, \eqref{eq:Fatoubound} and \eqref{eq:boundedT1T2}, that almost surely for every $k_0\in\mathbb{N}$,
\begin{equation}\label{eq:upper}
\limsup_{n\to\infty}\mathbb{E}\left[\exp\left(-s a_n^{\alpha\bar\rho}W_{n,k_0}''\right)\big{|}\mathcal{F}_{k_0}\right]\le\exp\left(s'\sum_{|u|=k_0}K(c_2\tilde{h}(X_u)-1)R(X_u)e^{-X_u}\right).
\end{equation}
Since $\min_{|u|=k_0}X_u\to\infty$ almost surely as $k_0\to\infty$ and $\tilde{h}(x)\to0$ as $x\to\infty$, we get almost surely,
\begin{equation}
\label{eq:upper2}
\lim_{k_0\to\infty}\sum_{|u|=k_0}K(c_2\tilde{h}(X_u)-1)R(X_u)e^{-X_u}=\lim_{k_0\to\infty}-\sum_{|u|=k_0}KR(X_u)e^{-X_u}=-\kappa Z_\infty,
\end{equation}
by \eqref{eq:lower2}.
Together with \eqref{eq:upper}, this shows that
\begin{equation}\label{eq:upper3}
\limsup_{k_0\to\infty}\limsup_{n\to\infty}\mathbb{E}\left[\exp(-s a_n^{\alpha\bar\rho}W_{n,k_0}'')\Big{|}\FF_{k_0}\right]\le\exp\left(-s'\kappa Z_\infty\right),\quad\text{a.s.}
\end{equation}
Letting $s'\to s$ in \eqref{eq:upper3}, and using \eqref{eq:lower3}, we finally get for any $s>0$,
\begin{align*}
\lim_{k_0\to\infty}\liminf_{n\to\infty}\mathbb{E}\left[\exp(-s a_n^{\alpha\bar\rho}W_{n,k_0}'')\Big{|}\FF_{k_0}\right]&=\lim_{k_0\to\infty}\limsup_{n\to\infty}\mathbb{E}\left[\exp(-s a_n^{\alpha\bar\rho}W_{n,k_0}'')\Big{|}\FF_{k_0}\right]\\
&=\exp\left(-s\kappa Z_\infty\right),\quad\text{a.s.}
\end{align*}

Using Cantor diagonal extraction, there exists a sequence $(k_0(n))_{n\ge 0}$ (that goes to infinity as $n\to\infty$) such that for any $s\in\mathbb{Q}\cap(0,\infty)$, $\mathbb{E}\left[\exp(-s a_n^{\alpha\bar\rho}W_{n,k_0(n)}'')\Big{|}\FF_{k_0(n)}\right]$ converges to $\exp\left(-s\kappa Z_\infty\right)$ almost surely as $n\to\infty$. Then applying Lemma B.1 in \cite{Boutaud2019} with $Y_n=a_n^{\alpha\bar\rho}W_{n,k_0(n)}''$ and $\GG_n=\FF_{k_0(n)}$ we obtain
\begin{equation*}
a_n^{\alpha\bar\rho}W_{n,k_0(n)}''\underset{n\to\infty}{\longrightarrow}\kappa Z_\infty\quad\text{in probability}.
\end{equation*}

We conclude by using Equation~\eqref{eq:compareWnsecond} to see that $a_n^{\alpha\bar\rho}W_n$ converges in probability to $\kappa Z_\infty$ as $n\to\infty$.
\end{proof}

\section{Proofs of Lemma~\ref{lem:T1} and Lemma~\ref{lem:T2}}\label{sec:proofT1T2}

We know that $(X_{\xi_{\lfloor nt\rfloor}}/a_n)_t$ converges in distribution towards an $\alpha$-stable process under $\mathbb{P}^*_x$ as $n\to\infty$ and we will need to discuss what happens to the convergence of this process if we condition the walk $(X_{\xi_n})_n$ to stay non-negative up to some time $n$ or forever.
The proof of Lemma~\ref{lem:T1} relies on the behaviour of the rescaled process conditioned to stay non-negative up to time $n$.
\begin{proof}[Proof of Lemma~\ref{lem:T1}]
~\\
From equation (3.3) in Caravenna and Chaumont \cite{Caravenna2008}, we know that the rescaled process $(X_{\xi_{\lfloor nt\rfloor}}/a_n)_t$ under $\mathbb{P}^*(\bullet|\min_{k\le n}X_{\xi_k}\ge0)$, i.e.~ conditioned to stay non-negative up to time $n$, converges in distribution to the law $\mathbb{P}^{(m)}$ of the meander at time $1$.
Remembering that $0\le\ag\bar\rho\le 1$, we obtain that $a_n^{\alpha\bar\rho}e^{-X_{\xi_n}}$ converges to $0$ under the same conditioning. Moreover, the random variables $\left(\frac{a_n^{\alpha\bar\rho}}{\varepsilon}e^{-X_{\xi_n}}\right)\wedge 1$ are trivially bounded by $1$. Hence, using the dominated convergence theorem and recalling the definition of $T_1(x,\varepsilon,n)$ in \eqref{eq:defT1},
\begin{equation*}
T_{1}(x,\varepsilon,n)\underset{n\to\infty}{\longrightarrow}0.
\end{equation*}
\end{proof}

\begin{proof}[Proof of Lemma~\ref{lem:T2}]
~\\
Let $\varepsilon>0$ and suppose, without loss of generality, that $\varepsilon<1$.
By Proposition~\ref{prop:momentWnprime}, we have
\begin{align*}
&\frac{a_n^{\alpha\bar\rho}}{\varepsilon}\sum_{k=0}^{n-1}\sum_{\substack{i\in\mathbb{N}\\\xi_{k}i\ne\xi_{k+1}}}\mathbb{E}_{X_{\xi_ki}}[W_{n-k-1}']\\
&\le\frac{c_{2}K}{\varepsilon}\sum_{k=0}^{n-1}\frac{a_n^{\alpha\bar\rho}}{a_{n-k-1}^{\alpha\bar\rho}}\sum_{\substack{i\in\mathbb{N}\\\xi_{k}i\ne\xi_{k+1}}}R(X_{\xi_ki})e^{-X_{\xi_ki}}\\
&\le\frac{c_{2}K}{\varepsilon}\sum_{k=0}^{n-1}\frac{a_n^{\alpha\bar\rho}}{a_{n-k-1}^{\alpha\bar\rho}}\sum_{\substack{i\in\mathbb{N}\\\xi_{k}i\ne\xi_{k+1}}}R(X_{\xi_k}+X_{\xi_ki}-X_{\xi_k})e^{-X_{\xi_k}-(X_{\xi_ki}-X_{\xi_k})}.
\end{align*}
Define for every $k\in\mathbb N$,
\begin{equation*}
V_k=\sum_{\substack{i\in\mathbb{N}\\\xi_{k}i\ne\xi_{k+1}}}\left(1+(X_{\xi_ki}-X_{\xi_k})^+\right)^{\ag\bar\rho}e^{-(X_{\xi_ki}-X_{\xi_k})}.
\end{equation*}
Then by Lemma~\ref{lem:sousadditif}, we get
\begin{align*}
\frac{a_n^{\alpha\bar\rho}}{\varepsilon}\sum_{k=0}^{n-1}\sum_{\substack{i\in\mathbb{N}\\\xi_{k}i\ne\xi_{k+1}}}\mathbb{E}_{X_{\xi_ki}}[W_{n-k-1}']&\le\frac{c_{2}c_{1}'K}{\varepsilon}\sum_{k=0}^{n-1}\frac{a_n^{\alpha\bar\rho}}{a_{n-k-1}^{\alpha\bar\rho}}(1+X_{\xi_k}^+)^{\ag\bar\rho}e^{-X_{\xi_k}}V_k\\
&\le\frac{c_{7}}{\varepsilon}\sum_{k=0}^{n-1}\frac{a_n^{\alpha\bar\rho}}{a_{n-k-1}^{\alpha\bar\rho}}e^{-X_{\xi_k}/2}V_k,
\end{align*}
where $c_{7}>0$ is such that for all $x\in\mathbb{R}$, $c_{2}c_{1}'K(1+x^+)^{\ag\bar\rho}e^{-x}\le c_{7}e^{-x/2}$.

Recalling the definition of $T_2$ in \eqref{eq:defT2}, putting $Y_n=\left(\sum_{k=0}^{n-1}\left(\frac{a_n^{\alpha\bar\rho}}{a_{n-k-1}^{\alpha\bar\rho}}\frac{e^{-X_{\xi_k}/2}V_k}{\varepsilon}\wedge 1\right)\right)\wedge 1$ and using the previous inequalities yields for some constant $c_{8}>0$
\begin{equation}\label{eq:boundT2}
T_{2}(x,\varepsilon,n)\le c_{8}\mathbb{E}_{x}^{*}\left[Y_n\Big{|}\min_{k\le n}X_{\xi_k}\ge 0\right].
\end{equation}

In order to bound this expectation, we first bound $Y_n$.

For every $0\le k\le\lfloor n/2\rfloor-1$, there exists $s_k\in[1/2,1]$ such that $n-k-1=s_k n$. By assumption on the sequence $(a_n)_n$, there exists a function $\nu$, regularly varying at $\infty$ with index $1/\alpha$ such that $a_n=\nu(n)$. By Karamata's uniform convergence theorem, we have
\begin{equation*}
\forall s>0, \frac{\nu(x)^{\alpha\bar{\rho}}}{\nu(sx)^{\alpha\bar{\rho}}}\underset{x\to\infty}{\longrightarrow}s^{-\bar{\rho}},\quad\text{uniformly for $s$ in compact sets}.
\end{equation*}
This implies that for $n$ large enough, 
$$\frac{a_n^{\alpha\bar\rho}}{a_{n-k-1}^{\alpha\bar\rho}} \le 2^{\bar{\rho}}+1 \le 3,$$
and therefore,
\begin{equation*}
\left(\sum_{k=0}^{\lfloor n/2\rfloor}\left(\frac{a_n^{\alpha\bar\rho}}{a_{n-k-1}^{\alpha\bar\rho}}\frac{e^{-X_{\xi_k}/2}V_k}{\varepsilon}\wedge 1\right)\right)\wedge 1\le 3Y_n',
\end{equation*}
where
\begin{equation*}
Y_n'=\left(\sum_{k=0}^{\lfloor n/2\rfloor}\left(\frac{e^{-X_{\xi_k}/4}V_k}{\varepsilon}\wedge 1\right)\right)\wedge 1
\end{equation*}
(the factor $1/4$ is chosen for later convenience).
Thus we obtain for $n$ large enough, using furthermore that $a_n \ge 1$ for large $n$,
\begin{equation}\label{eq:boundYn}
Y_n\le 3Y_n'+Y_n'',
\end{equation}
where
\begin{equation*}
Y_n''=\left(\sum_{k=\lfloor n/2\rfloor}^{n-1}\left(\frac{a_n^{\alpha\bar\rho}e^{-X_{\xi_k}/2}V_k}{\varepsilon}\wedge 1\right)\right)\wedge 1.
\end{equation*}

By monotone convergence, we have for every $x\ge0$ that $Y_n'$ converges $\mathbb{P}_x^+$-almost surely as $n\to\infty$ to $Y_{\infty}'$ defined as
\begin{equation}
Y_\infty'=\left(\sum_{k=0}^{\infty}\left(\frac{e^{-X_{\xi_k}/4}V_k}{\varepsilon}\wedge 1\right)\right)\wedge 1.
\end{equation}
We now claim the following:
\begin{enumerate}
 \item[a)] $\mathbb{E}_x^+[Y_\infty']$ is finite for every $x\ge0$ and $\mathbb{E}_x^+[Y_\infty'] \to 0$ as $x\to\infty$.
 \item[b)] $Y_n'' \to 0$ as $n\to\infty$, in $\mathbb{P}_x^+$-probability.
\end{enumerate}
These two claims imply the statement of the lemma. Indeed, first applying Lemma~5.1 from Boutaud and Maillard \cite{Boutaud2019} to the r.v.'s $(Y_n')_{n\ge 0}$ and $Y_{\infty}'$, we have for every $x\ge0$,
\begin{equation*}
\lim_{n\to\infty}\mathbb{E}_{x}^{*}\left[Y_n'|\min_{k\le n}X_{\xi_k}\ge 0\right]=\mathbb{E}_{x}^+\left[Y_{\infty}'\right].
\end{equation*}
Second, applying again Lemma~5.1 from \cite{Boutaud2019} to the r.v.'s $(Y_n'')_{n\ge 0}$ and using Claim b), we get
\begin{equation*}
\lim_{n\to\infty}\mathbb{E}_{x}^{*}\left[Y_n''|\min_{k\le n}X_{\xi_k}\ge 0\right]=0.
\end{equation*}
Now plugging these two equalities into \eqref{eq:boundT2} and \eqref{eq:boundYn} gives us with some $c_{9}>0$,
\begin{equation*}
\limsup_{n\to\infty}T_2(x,\varepsilon,n)\le c_{9}\mathbb{E}_{x}^{+}\left[Y_\infty'\right].
\end{equation*}
Together with Claim a), this yields the lemma.

We now prove Claims a) and b), starting with Claim a).

Using the bound \eqref{eq:biasedbound}, for every $k\ge 0$, we get
\begin{multline}
 \qquad\mathbb{E}_{x}^+\left[\frac{e^{-X_{\xi_k}/4}V_k}{\eps}\wedge 1\big{|}\FF_{k}\right]\le \frac{c_{3}}{\varepsilon} f(X_{\xi_k}),\quad\text{where}\\
 f(y) = \mathbb{E}\left[\left(W_1+\frac{Z_1}{R(y)}\right)\left(e^{-y/4}(W_1+Z_1)\wedge 1\right)\right].\qquad\label{eq:deff}
\end{multline}
We decompose $f$:
\begin{equation*}
 f(y) = f_{1}(y)+\frac{f_{2}(y)}{R(y)},
\end{equation*}
with $f_1$ and $f_2$ defined in \eqref{eq:deff1} and \eqref{eq:deff2}.
Using \eqref{eq:f1f2integralbound} and the bound \eqref{eq:boundR} on $R$, we obtain
\begin{equation*}
\int_{0}^{\infty}f(y)y^{\ag-1}dy<\infty.
\end{equation*}




Now we compute:
\begin{equation*}
\mathbb{E}_{x}^{+}\left[Y_\infty'\right]\le\sum_{k=0}^{\infty}\mathbb{E}_{x}^+\left[\frac{e^{-X_{\xi_k}/4}V_k}{\varepsilon}\wedge 1\right]\le \frac{c_{3}}{\varepsilon}\mathbb{E}_{x}^{+}\left[\sum_{k=0}^{\infty}f\left(X_{\xi_k}\right)\right].
\end{equation*}

Using Lemma~\ref{lem:GreenEstimate}, we obtain that
\begin{equation*}
\mathbb{E}_{x}^{+}\left[\sum_{k=0}^{\infty}f\left(X_{\xi_k}\right)\right]\to 0,
\end{equation*}
and the expectation on the left-hand side is finite for every $x\ge 0$. This implies Claim a).

To prove Claim b) we use an invariance principle by Caravenna and Chaumont \cite{Caravenna2008}: for every $x\ge0$, as $n\to\infty$, the rescaled process $\left(\frac{X_{\xi_{\lfloor nt\rfloor}}}{a_n}\right)_{t\ge 0}$ converges in distribution under $\mathbb{P}^+_x$ to a non-degenerate limit, independent of $x$, and which is strictly positive at all times $t>0$ (this limit can be interpreted as the process $\mathcal X$ conditioned to stay positive for all times). As a consequence, for every $\eta\in(0,1)$, there exists $\dg>0$ such that for large $n$, with probability at least $1-\eta$, we have $X_{\xi_k}>\dg a_n$ for every $k\in\{\lfloor n/2\rfloor,\ldots,n-1\}$. So there is some positive constant $c_{10}$ such that, with probability at least $1-\eta$,
\begin{align*}
Y_n''&\le\left(\sum_{k=\lfloor n/2\rfloor}^{n-1}\left(\frac{a_n^{\alpha\bar\rho} e^{-X_{\xi_k}/2}V_k}{\varepsilon}\wedge 1\right)\right)\wedge 1\\
&\le c_{10}\sum_{k=\lfloor n/2\rfloor}^{n-1}\left(\frac{e^{-X_{\xi_k}/4}V_k}{\varepsilon}\wedge 1\right),
\end{align*}
which converges to $0$ in $\mathbb{P}^+_x$ probability as $n\to\infty$ since for every $x\ge 0$, as shown above,
\begin{equation*}
\mathbb{E}_{x}^{+}\left[\sum_{k=0}^{\infty}\left(\frac{e^{-X_{\xi_k}/4}V_k}{\varepsilon}\wedge 1\right)\right]<\infty.
\end{equation*}
This ends the proof of Claim b) and thus of the lemma.
\end{proof}

\section{Proof of Equation (\ref{eq:kappaexplicit})}\label{sec:computingkappa}
This section is devoted to the proof of Equation (\ref{eq:kappaexplicit}). Recall the $\alpha$-stable process $\mathcal X$ defined in the introduction. We assume $\lambda=1$, where $\lambda$ is the parameter in \eqref{eq:caracLevy}. Following Caravenna and Chaumont \cite{Caravenna2008}, one can define for every $x>0$ a probability measure $\mathbb P^+_x$ such that for all $t\ge0$,
\[
\frac{d\mathbb P^+_x}{d\mathbb P_x}\Big|_{\FF_t} = \frac{\mathcal X_1^{\alpha\bar{\rho}}}{x^{\alpha\bar{\rho}}},
\]
where $(\FF_t)_{t\ge0}$ denotes here the canonical filtration of the process $\mathcal X$. 
Furthermore, the weak limit $\mathbb P^+ = \mathbb P^+_0 = \lim_{x\downarrow0}\mathbb P^+_x$ exists. This probability law is related to the law of the meander at time $1$ by:
\begin{equation}
\frac{d\mathbb{P}^+}{d\mathbb{P}^{(m)}}\Big|_{\FF_1} =\kappa \mathcal{X}_1^{\ag\bar\rho}.
\end{equation}
From this we deduce that
\begin{equation}\label{eq:kappaplus}
\kappa=\mathbb{E}^+\left[\frac{1}{\mathcal{X}_{1}^{\ag\bar\rho}}\right].
\end{equation}

Now we suppose that $\alpha\in(1,2]$ and $\ag\rho=1$, so that the Lévy process $\mathcal{X}$ has no positive jumps. For all $s\ge0$, put
\begin{equation}
\psi(s)=\log\mathbb{E}\left[e^{s\mathcal{X}_1}\right].
\end{equation}
Using Theorem 2.6.1 from Zolotarev \cite{Zolotarev}, we have the following explicit expression of $\psi$:
\begin{equation}
\psi(s)=s^{\ag}.
\end{equation}
In particular, $0$ is the only solution to $\psi(s)=0$. Then following Bertoin \cite[Chapter VII]{Bertoin1996}, define $W:[0,\infty)\to[0,\infty)$ the scale function with the following characterization: $W$ is the unique absolutely continuous increasing function with Laplace transform
\begin{equation}\label{eq:scalefunction}
\forall s>0, \int_{0}^{\infty}e^{-sx}W(x)dx=\frac{1}{\psi(s)}=\frac{1}{s^{\ag}}.
\end{equation}
Using Equation~\eqref{eq:scalefunction} and the fact that when $\ag\rho=1$ we have $\ag-1=\ag\bar\rho$, we obtain that
\begin{equation*}
W(x)=\frac{x^{\ag\bar\rho}}{\Gamma(\ag)}.
\end{equation*}
Using Corollary 16 from Bertoin \cite[Chapter VII]{Bertoin1996} together with equation~\eqref{eq:kappaplus}, we get
\begin{align}
\kappa&=\mathbb{E}\left[\frac{\mathcal{X}_1W\left(\mathcal{X}_1\right)}{\mathcal{X}_1^{\ag\bar\rho}}\boldsymbol 1_{\mathcal{X}_1>0}\right].\\
&=\frac{1}{\Gamma(\ag)}\mathbb{E}\left[\mathcal{X}_1\boldsymbol 1_{\mathcal{X}_1>0}\right].\label{eq:kappaZolo}
\end{align}
We proceed by computing $\mathbb{E}\left[\mathcal{X}_1\boldsymbol 1_{\mathcal{X}_1>0}\right]$. Using Theorem~2.6.2 from Zolotarev \cite{Zolotarev}, we have for all $s\ge0$:
\begin{align*}
\mathbb{E}\left[\mathcal{X}_1e^{-s\mathcal{X}_1}\boldsymbol 1_{\mathcal{X}_1>0}\right]&=\frac{1}{\pi}\int_{0}^{\infty}\ag s^{\ag-1}u^\ag e^{-(su)^\ag}\frac{\sin(\pi\rho)}{u^2+2u\cos(\pi\rho)+1}du\\
&=\frac{\ag\sin(\pi\rho)}{\pi}\int_{0}^{\infty}v^\ag e^{-v^\ag}\frac{1}{v^2+2sv\cos(\pi\rho)+s^2}dv,
\end{align*}
where we used the change of variables $v=su$. We then make $s\to 0$ using dominated convergence and get, changing again variables,
\begin{align*}
\mathbb{E}\left[\mathcal{X}_1\boldsymbol 1_{\mathcal{X}_1>0}\right]&=\frac{\ag\sin(\pi\rho)}{\pi}\int_{0}^{\infty}v^{\ag-2}e^{-v^\ag}dv\\
&=\frac{\sin(\pi\rho)}{\pi}\int_{0}^{\infty}t^{-1/\ag}e^{-t}dt\\
&=\frac{\sin(\pi\rho)}{\pi}\Gamma\left(1-\frac{1}{\ag}\right)\\
&=\frac{\sin\left(\frac{\pi}{\alpha}\right)}{\pi}\Gamma\left(1-\frac{1}{\ag}\right) && \text{(since $\alpha\rho=1$)}\\
&=\frac{1}{\Gamma(1/\ag)}&&\text{(by Euler's reflection formula)}.
\end{align*}
Finally, this yields together with equation \eqref{eq:kappaZolo}:
\begin{equation*}
\kappa=\frac{1}{\Gamma(\alpha)\Gamma(1/\ag)},
\end{equation*}
which is Equation~\eqref{eq:kappaexplicit}.

\bibliographystyle{alpha}
\bibliography{these_pierre}

\end{document}